\newcommand{\rd}{\,\mathrm{d}}
\numberwithin{equation}{section}
\newtheorem{theorem}{Theorem}[section]
\newtheorem{lemma}[theorem]{Lemma}
\newtheorem{corollary}[theorem]{Corollary}
\newtheorem{remark}[theorem]{Remark}
\def\tu{\tilde{u}}
\def\tU{\tilde{U}}
\def\eps{\varepsilon}
\begin{document}

\title{On the uniform accuracy of implicit-explicit backward differentiation formulas (IMEX-BDF) for stiff hyperbolic relaxation systems and kinetic equations\footnote{JH's research was supported in part by NSF grant DMS-1620250 and NSF CAREER grant DMS-1654152. RS's research was supported in part by NSF grants DMS-1613911, RNMS-1107444 (KI-Net) and ONR grant N00014-1812465.}}

\author{Jingwei Hu\footnote{Department of Mathematics, Purdue University, West Lafayette, IN 47907, USA (jingweihu@purdue.edu).} \  \  
	    and \ Ruiwen Shu\footnote{Department of Mathematics, University of Maryland, College Park, MD 20742, USA (rshu@cscamm.umd.edu).}}    
\maketitle

\begin{abstract}
Many hyperbolic and kinetic equations contain a non-stiff convection/transport part and a stiff relaxation/collision part (characterized by the relaxation or mean free time $\varepsilon$). To solve this type of problems, implicit-explicit (IMEX) multistep methods have been widely used and their performance is understood well in the non-stiff regime ($\varepsilon=O(1)$) and limiting regime ($\varepsilon\rightarrow 0$). However, in the intermediate regime (say, $\varepsilon=O(\Delta t)$), uniform accuracy has been reported numerically without a complete theoretical justification (except some asymptotic or stability analysis). In this work, we prove the uniform accuracy -- an optimal {\it a priori} error bound -- of a class of IMEX multistep methods, IMEX backward differentiation formulas (IMEX-BDF), for linear hyperbolic systems with stiff relaxation. The proof is based on the energy estimate with a new multiplier technique. For nonlinear hyperbolic and kinetic equations, we numerically verify the same property using a series of examples.
%Many hyperbolic and kinetic equations contain a non-stiff convection/transport part and a stiff relaxation/collision part (characterized by the relaxation or mean free time $\varepsilon$). To solve this type of problems, implicit-explicit (IMEX) Runge-Kutta or multistep methods have been widely used and their performance is understood well in the non-stiff regime ($\varepsilon=O(1)$) and limiting regime ($\varepsilon\rightarrow 0$). However, in the intermediate regime (say, $\varepsilon=O(\Delta t)$), some uniform accuracy or order reduction phenomena have been reported numerically without much theoretical justification. In this work, we prove the uniform accuracy (an optimal {\it a priori} error bound) of a class of IMEX multistep methods -- IMEX backward differentiation formulas (IMEX-BDF) for linear hyperbolic systems with stiff relaxation. The proof is based on the energy estimate with a new multiplier technique. For nonlinear hyperbolic and kinetic equations, we numerically verify the same property using a series of examples.
\end{abstract}

{\small 
{\bf Key words.}  Hyperbolic relaxation system, kinetic equation, stiff, implicit-explicit, backward differentiation formula, multiplier technique, energy estimate, regularity.

{\bf AMS subject classifications.} 35L03, 82C40, 65L04, 65L06, 65M12.
}

%%%%%%%%%%%%%%%%%%%%%%%%%%%%%%%%%%%%%%%
\section{Introduction}
%%%%%%%%%%%%%%%%%%%%%%%%%%%%%%%%%%%%%%%

Many hyperbolic and kinetic equations contain a non-stiff convection/transport part and a stiff relaxation/collision part. For example, a simple linear hyperbolic system with stiff relaxation (cf.~\cite{CLL94}) reads:
\begin{equation}\label{linearhyp}\left\{\begin{split}
& \partial_t u + \partial_x v = 0, \\
& \partial_t v +  \partial_x u = \frac{1}{\eps}(bu-v),
\end{split}\right.\end{equation}
where $u=u(t,x)$, $v=v(t,x)$ are the unknown functions of time $t\geq 0$ and position $x \in\mathbb{R}$ or $\mathbb{T}=[0,2\pi]$ (a torus), $b$ is a constant such that $|b|<1$, and $\eps>0$ is the relaxation parameter. Depending on the application, $\eps$ could take any value between 0 and $O(1)$, leading to non-stiff regime ($\eps=O(1)$), stiff regime ($\eps\ll 1$), or intermediate regime ($0<\eps<1$, neither too small nor too big). In particular, when $\eps\rightarrow 0$, the second equation of (\ref{linearhyp}) formally implies $v\rightarrow bu$. Substituting it into the first equation yields 
\begin{equation} \label{con}
\partial_t u + \partial_x (bu)=0,
\end{equation}
which is the so-called zero relaxation limit. When $0<\eps \ll 1$, via the Chapman-Enskog expansion, one can derive the next order approximation:
\begin{equation} \label{condiff}
\partial_t u +\partial_x (bu)=\eps (1-b^2)\partial_{xx}u,
\end{equation}
which is a convection-diffusion equation.

Due to the multiscale nature of the problem (\ref{linearhyp}) (or hyperbolic/kinetic equations of a similar kind), a popular numerical method for time discretization is the so-called implicit-explicit (IMEX) schemes, being either multistage (Runge-Kutta type) \cite{Jin95, JX95, ARS97, PR05, FJ10, DP13} or multistep \cite{Crouzerix80, ARW95, HR07, DP17, ADP19}. In these schemes, the non-stiff convection part is treated explicitly, while the stiff relaxation part is treated implicitly. In this way, the schemes are expected to be stable for $\varepsilon$ ranging between $0$ and $O(1)$, provided the time step $\Delta t$ only satisfies the CFL condition from the convection part. Furthermore, the high order accuracy of the schemes can often be guaranteed when $\eps=O(1)$ (being the order of the full IMEX schemes) and when $\eps \rightarrow 0$ (being the order of the explicit part of the IMEX schemes). The latter is a direct consequence of the schemes being asymptotic-preserving (AP) \cite{Jin99}. In the intermediate regime (say, $\eps =O(\Delta t)$), many IMEX-RK schemes would suffer from order reduction \cite{Boscarino07} and special schemes can be designed to alleviate this by imposing additional order conditions \cite{Boscarino09, BR09, BP17}. On the other hand, the IMEX multistep schemes can often maintain the uniform accuracy for a wide range of  $\varepsilon$ as suggested by strong numerical evidence \cite{HR07, DP17, ADP19}.

%However, not much can be said, even formally, about the accuracy of the scheme in the intermediate regime (say, $\eps =O(\Delta t)$). In fact, various past numerical studies for equations of a similar type to (\ref{linearhyp}) suggest that the IMEX-RK schemes often suffer from the order reduction in the intermediate regime, while the IMEX multistep schemes can maintain the uniform accuracy for a wide range of $\eps$, see, for instance, the numerical results in \cite{HR07}. 

Motivated by the above observation, we study in this paper the stability and accuracy of a class of IMEX multistep schemes, IMEX backward differentiation formulas (IMEX-BDF), for (\ref{linearhyp}) and its generalization to the variable coefficient case. We rigorously prove the {\it uniform stability and accuracy} of the IMEX-BDF schemes up to fourth order when coupled with spectral discretization in space. Simply speaking, our main result implies the following:
\begin{equation} \label{error}
\|u(T,\cdot)-U(T,\cdot)\|_{L^2}+\|v(T,\cdot)-V(T,\cdot)\|_{L^2} \leq C \Delta t^q,
\end{equation}
where $U$ and $V$ are the numerical solutions at time $T$, $q$ is the order of the scheme, and $C$ is a constant depending on $T$, $q$, etc., but {\it independent of $\eps$}. Note that (\ref{error}) is an optimal {\it a priori} error bound that holds regardless of the value of $\eps$, hence the accuracy of the scheme is guaranteed in all regimes! This is especially useful when $\eps$ is neither too small nor too big. For instance, if we are interested in capturing the behavior of the solution at $O(\eps)$, or in other words, the solution to the limiting equation (\ref{condiff}), all we need is to resolve the $O(\eps)$ term in the sense that $\Delta t^q< O(\eps)$. Here the advantage of using high order (at least second order) schemes should be clear: first order schemes are not able to capture $O(\eps)$ information since $\Delta t$ is generally bigger than $O(\eps)$ when $\eps$ is small. 

Our proof is based on the energy estimate along with a new multiplier technique. Although the multiplier technique for studying the stability of multistep methods for ODEs traces back to \cite{Dahlquist78,NO81}, it has only recently been used in the numerical analysis of parabolic problems, see for instance \cite{LMV13, AL15, AK16}. Existing multipliers cannot be applied to study our problem (\ref{linearhyp}) due to its special convection-relaxation structure. Therefore, a new class of multipliers needs to be invented to prove the stability which constitutes one of our main contributions of this work. The uniform stability, combined with the consistency of the IMEX-BDF schemes which is a consequence of the uniform-in-$\eps$ regularity of \eqref{linearhyp}, finally yields the desired uniform accuracy. To the best of our knowledge, this is the first rigorous high order uniform-in-$\eps$ accuracy result established for stiff hyperbolic type equations.

Although we used the linear system (\ref{linearhyp}) as a prototype problem to prove the main result, we expect similar conclusion holds for more general nonlinear hyperbolic equations, among which a very important class of equations in multiscale modeling is the following kinetic equation (cf.~\cite{Villani02}):
\begin{equation} \label{kinetic}
\partial_t f+ v\cdot \nabla_x f=\frac{1}{\eps} \mathcal{Q}(f).
\end{equation}
Here $f=f(t,x,v)$ is the probability density function (PDF) of time $t\geq 0$, position $x\in \mathbb{R}^d$, and velocity $v \in \mathbb{R}^d$. $\mathcal{Q}$ is the collision operator modeling the interaction between particles: $\mathcal{Q}$ could be the Boltzmann collision operator which is a nonlinear integral operator \cite{Cercignani}, or some simplified version mimicking the properties of the full Boltzmann operator, e.g., BGK \cite{BGK54}, ES-BGK \cite{Holway66}, or Shakov models \cite{Shakhov68}. In this context, $\eps$ is the so-called Knudsen number defined as the ratio of the mean free path and characteristic length. In practice, the value of $\eps$ characterizes the flow regime \cite{Struchtrup}: 1) $\eps \rightarrow 0$, Euler regime (the flow is well described by the compressible Euler equations, analogue of (\ref{con})); 2) $0<\eps \lesssim 0.01 $, Navier-Stokes regime (the flow is well described by the Navier-Stokes-Fourier (NSF) equations, analogue of (\ref{condiff})); 3) $0.01\lesssim \eps \lesssim 1$, transition regime (NSF equations fail, and one has to resort to extended macroscopic models or the original equation (\ref{kinetic})). Since (\ref{linearhyp}) and (\ref{kinetic}) share a common structure (non-stiff convection/transport plus stiff relaxation/collision), we conjecture on a uniform error bound similar to (\ref{error}) for smooth solutions, which implies that the IMEX-BDF schemes could capture the Navier-Stokes regime or even the transition regime with uniform accuracy. While proving this rigorously is currently out of reach due to the complexity of the problem, we demonstrate it numerically in the paper using the kinetic BGK equation.

To close this section, we mention that a parallel direction to our work is to design uniformly accurate methods for highly oscillatory equations, see the recent work \cite{CLMV19} and references therein. Although some ideas involved are of a similar flavor, the structure of equations in both fields are very different: our problem is of dissipative nature (endowed with an entropy), while the highly oscillatory problem is often conservative (endowed with a Hamiltonian).

The rest of the paper is organized as follows. In Section~{\ref{sec:BDF}, we briefly introduce the IMEX-BDF schemes employed in this paper. In Section~\ref{sec:uniform}, we establish our main result for the linear system (\ref{linearhyp}). To this end, we first study the regularity of the solution in time, then introduce the fully discrete scheme by adopting a Fourier spectral method in space. With these preparations, we prove in Section~\ref{subsec:stability} the uniform stability of the schemes which is facilitated by a newly introduced multiplier technique, and then in Section~\ref{subsec:accuracy} the uniform accuracy of the schemes by combining the stability and consistency results. In Section~\ref{sec:variable}, we extend our analysis to the variable coefficient case. In Section~\ref{sec:num}, we present several numerical studies including the verification of our theoretical results for the linear problem and investigation of some nonlinear hyperbolic and kinetic equations. The paper is concluded in Section~\ref{sec:con}.

%%%%%%%%%%%%%%%%%%%%%%%%%%%%%%%%%%%%%%%
\section{IMEX-BDF schemes for (\ref{linearhyp})}
\label{sec:BDF}
%%%%%%%%%%%%%%%%%%%%%%%%%%%%%%%%%%%%%%%

In this section, we briefly describe the IMEX-BDF schemes for the prototype problem (\ref{linearhyp}) and fix the notations for the following discussion.

Let $U^n$, $V^n$ denote the numerical solutions at time $t_n=T_0+n\Delta t$, where $T_0$ is the initial time, $n$ is a non-negative integer, and $\Delta t$ is the time step size, then the $q$-th order IMEX-BDF scheme for the system \eqref{linearhyp} reads
\begin{equation}\label{BDF0}\left\{\begin{split}
& \sum_{i=0}^q\alpha_i U^{n+i} + \Delta t \sum_{i=0}^{q-1}\gamma_i \partial_x V^{n+i} = 0, \\
& \sum_{i=0}^q\alpha_i V^{n+i} + \Delta t \sum_{i=0}^{q-1}\gamma_i \partial_x U^{n+i} = \frac{\beta\Delta t}{\eps}(bU^{n+q}-V^{n+q}), \\
\end{split}\right.\end{equation}
where the coefficients $\alpha=(\alpha_0,\dots,\alpha_q)$, $\gamma=(\gamma_0,\dots,\gamma_{q-1})$ and $\beta$ are given by
\begin{equation}
\beta = \frac{1}{\sum_{j=1}^q\frac{1}{j}},\quad \alpha(\zeta)= \beta\sum_{j=1}^q\frac{1}{j}\zeta^{q-j}(\zeta-1)^j=\sum_{i=0}^q \alpha_i\zeta^i,\quad \gamma(\zeta)= \beta(\zeta^q-(\zeta-1)^q)=\sum_{i=0}^{q-1} \gamma_i\zeta^i.
\end{equation}
The coefficients of IMEX-BDF schemes up to fourth order are provided in Table~\ref{coeff_table}. Note that IMEX-BDF1 is just the forward-backward Euler scheme.
\begin{table}[htp]
\centering
\begin{tabular}{|c|c|c|c|}
\hline 
$q$ & $\alpha$ & $\gamma$ & $\beta$ \\
\hline 
1 & $(-1,1)$ & 1 & 1 \\
2 & $\left(\frac{1}{3},-\frac{4}{3},1\right)$ & $\left(-\frac{2}{3},\frac{4}{3}\right)$ & $\frac{2}{3} $\\ 
3 & $\left(-\frac{2}{11},\frac{9}{11},-\frac{18}{11},1\right) $& $\left(\frac{6}{11},-\frac{18}{11},\frac{18}{11}\right)$ & $\frac{6}{11}$ \\
4 & $\left(\frac{3}{25},-\frac{16}{25},\frac{36}{25},-\frac{48}{25},1\right)$ &$ \left(-\frac{12}{25},\frac{48}{25},-\frac{72}{25},\frac{48}{25}\right)$ & $ \frac{12}{25} $\\
\hline 
\end{tabular}
\caption{Coefficients of IMEX-BDF schemes up to fourth order (cf.~\cite{HR07}).}
\label{coeff_table}
\end{table}

To initiate the scheme (\ref{BDF0}), one needs $2q$ starting values: $U^0, \dots, U^{q-1}$, $V^0, \dots, V^{q-1}$. In practice, they can be obtained using IMEX-RK schemes. With the proper starting values, the scheme can proceed by first solving for $U^{n+q}$ from the first equation of \eqref{BDF0} and then solving for $V^{n+q}$ from the second equation.

The above IMEX-BDF scheme is AP in the sense that, for initial data satisfying $V^i = bU^i,\,i=0,\dots,q-1$, it captures the asymptotic limit \eqref{con} when $\eps \rightarrow 0$ with fixed $\Delta t$. Indeed, as $\varepsilon \rightarrow 0$, the second equation of \eqref{BDF0} formally implies $V^{n+q}=bU^{n+q}$ for $n\geq 0$, hence $V^i=bU^i$ for all $i\ge0$. Substituting this into the first equation of \eqref{BDF0} yields
\begin{equation}\label{limit_sch}
 \sum_{i=0}^q\alpha_i U^{n+i} + \Delta t \sum_{i=0}^{q-1}\gamma_i \partial_x (bU^{n+i})= 0,
\end{equation}
i.e., in the limit $\varepsilon \rightarrow 0$, the scheme becomes a consistent $q$-th order explicit multistep scheme for \eqref{con}.

\begin{remark}
Here for simplicity we only present the scheme for the linear problem (\ref{linearhyp}). If the relaxation term is nonlinear, e.g., $bu$ on the right-hand side of (\ref{linearhyp}) is replaced by a nonlinear function $f(u)$, the scheme works equally. In fact, due to the special structure of the equation, one can still obtain $U^{n+q}$ first and then $V^{n+q}$, and no iteration is needed, i.e., a nonlinear implicit scheme can be implemented explicitly. Similar idea applies to the nonlinear kinetic equations, for instance, the BGK equation, see Appendix~\ref{appendix:A} for a brief description.
\end{remark}

%%%%%%%%%%%%%%%%%%%%%%%%%%%%%%%%%%%%%%%
\section{Uniform accuracy of the IMEX-BDF schemes}
%%%%%%%%%%%%%%%%%%%%%%%%%%%%%%%%%%%%%%%
\label{sec:uniform}

In this section, we prove the uniform accuracy of the IMEX-BDF schemes for the linear hyperbolic relaxation system (\ref{linearhyp}). To this end, we need to employ the spatial discretization as well. For simplicity, we assume that the spatial domain is $[0,2\pi]$ with periodic boundary condition and adopt the Fourier-Galerkin spectral method. 

The proof of uniform accuracy consists of three steps. First, we study the regularity of \eqref{linearhyp}, especially for high order time derivatives of $u$ and $v$, which is necessary for high order consistency of the numerical scheme. Next, we establish the uniform stability of the fully discrete scheme by using energy estimates with the multiplier technique. A new class of multipliers is introduced to overcome the difficulty coming from the special structure of \eqref{linearhyp}. Finally, we prove the uniform accuracy by combining the consistency and stability results.

In the following, all the integrals without range refer to $\int_0^{2\pi}\cdot \,\rd{x}$; function norms $\|\cdot \|$ without subscript refer to $L^2$ norm in $x$; and $C$ denotes a generic constant independent of $\eps$.

\subsection{Regularity estimate}

Hyperbolic relaxation systems, especially the nonlinear ones, have been studied throughly in the 90's, where the main concern is to justify rigorously the convergence to the zero relaxation limit (see \cite{Natalini98} for a survey). Here our purpose is different as we need the uniform-in-$\eps$ regularity in time to prove the consistency of high order time discretization. Therefore, we give a self-contained proof in this subsection.

To study the regularity of \eqref{linearhyp}, we first reformulate it into a new form, similar to the micro-macro decomposition in kinetic theory \cite{LY04, BLM08}. Introducing a new variable $w = v-bu$, one can rewrite \eqref{linearhyp} as
\begin{equation}\label{eq1}\left\{\begin{split}
& \partial_t u + \partial_x (bu+w) = 0, \\
& \partial_t w + \partial_x ((1-b^2)u-bw) = -\frac{1}{\eps}w.
\end{split}\right.\end{equation}
Multiplying the two equations of \eqref{eq1} by $u$ and $w$ respectively and integrating in $x$, we get
\begin{equation}
\partial_t \frac{1}{2}\|u\|^2 + \int u\partial_x w \rd{x} = 0 ,
\end{equation}
and
\begin{equation}
\partial_t \frac{1}{2}\|w\|^2 + (1-b^2)\int w\partial_x u \rd{x} = -\frac{1}{\eps}\|w\|^2.
\end{equation}
Then a linear combination of the above two relations gives the energy estimate
\begin{equation}\label{energy}
\partial_t \frac{1}{2}((1-b^2)\|u\|^2+\|w\|^2) = -\frac{1}{\eps}\|w\|^2 \le 0,
\end{equation}
which implies
\begin{equation}\label{energyd}
\frac{1}{2}((1-b^2)\|u(t)\|^2+\|w(t)\|^2) \le \frac{1}{2}((1-b^2)\|u(0)\|^2+\|w(0)\|^2).
\end{equation}
Notice that the condition $|b|<1$ guarantees the equivalence of $\|u\|^2+\|w\|^2$ and the Lyapunov functional $(1-b^2)\|u\|^2+\|w\|^2$.

The regularity estimate for \eqref{eq1} is stated as follows.

\begin{theorem}\label{thm_reg}
For any integer $s\ge 0$, assume 
\begin{equation}\label{thm_reg_0}
\|u(0)\|_{H^s}^2 + \|w(0)\|_{H^s}^2 =: E_0 < \infty.
\end{equation}
Then for all $t\ge s\eps \log(1/\eps)$, the solution to \eqref{eq1} satisfies
\begin{equation} \label{energy1}
\|u(t)\|_{H^s}^2 + \|w(t)\|_{H^s}^2 \le CE_0,
\end{equation}
also
\begin{equation}\label{thm_reg_1}
\|\partial_t^{r_1}\partial_x^{r_2} u(t)\|^2 + \|\partial_t^{r_1}\partial_x^{r_2} w(t)\|^2 \le CE_0,\quad r_1+r_2 \le s,
\end{equation}
and
\begin{equation}\label{thm_reg_2}
\|\partial_t^{r_1}\partial_x^{r_2} w(t)\|^2 \le CE_0\eps^2, \quad r_1+r_2 \le s-1.
\end{equation}
\end{theorem}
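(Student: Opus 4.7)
The plan is to prove Theorem 3.1 in three stages, building on the reformulation (3.5) and the basic energy identity (3.6).

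\emph{Stage 1 (uniform $H^s_x$ bound).} Since (3.5) has constant coefficients, for each $k \le s$ the pair $(\partial_x^k u,\partial_x^k w)$ satisfies exactly the same system as $(u,w)$. Applying (3.6) and (3.7) termwise and summing over $k$ immediately yields $\|u(t)\|_{H^s}^2 + \|w(t)\|_{H^s}^2 \le CE_0$ for all $t\ge 0$, which in particular gives (3.3); no waiting time is required here.

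\emph{Stage 2 (pure spatial derivatives of $w$ are of size $\eps$).} For $k\le s-1$, I would apply $\partial_x^k$ to the second equation of (3.5), multiply by $\partial_x^k w$ and integrate. The self-interaction $b\int \partial_x^k w\cdot\partial_x^{k+1}w\rd x$ vanishes by periodicity, and Young's inequality on the remaining cross term gives
\begin{equation*}
\frac{d}{dt}\|\partial_x^k w\|^2 + \frac{1}{\eps}\|\partial_x^k w\|^2 \le C\eps\|\partial_x^{k+1}u\|^2 \le C\eps E_0
\end{equation*}
by Stage 1. Gronwall's inequality then delivers $\|\partial_x^k w(t)\|^2 \le e^{-t/\eps}\|\partial_x^k w(0)\|^2 + C\eps^2 E_0$, and the first term is absorbed into the second once $t\ge 2\eps\log(1/\eps)$. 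This is (3.5) in the special case $r_1=0$.

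\emph{Stage 3 (mixed time-space derivatives).} Differentiating (3.5) in $t$ yields the recursion
\begin{equation*}
\partial_t^{r_1+1}u = -b\,\partial_x\partial_t^{r_1}u - \partial_x\partial_t^{r_1}w, \qquad \partial_t^{r_1+1}w = -(1-b^2)\partial_x\partial_t^{r_1}u + b\,\partial_x\partial_t^{r_1}w - \eps^{-1}\partial_t^{r_1}w.
\end{equation*}
I would induct on $r_1$: the spatial derivatives on the right are supplied by the previous induction level, while the singular term $\eps^{-1}\partial_t^{r_1}w$ is tamed by first establishing an $O(\eps)$ bound on $\partial_t^{r_1}w$ via a Stage 2-type Gronwall argument applied to the evolution equation satisfied by $\partial_t^{r_1}w$. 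The two displayed identities then read off the desired bounds on $\partial_t^{r_1+1}u$ and $\partial_t^{r_1+1}w$ by substitution, closing the induction and giving (3.4)--(3.5).

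The hard part will be controlling the initial transient of the singular term. Iterating the $w$-equation shows that $\partial_t^{r_1}w|_{t=0}$ can be as large as $O(\eps^{-r_1})$, so the Gronwall decay factor $e^{-t/\eps}$ must eat both this initial burst and an additional $\eps^2$ margin in order to close each inductive step. This forces the cumulative waiting time to grow linearly in the total derivative order, which is precisely the origin of the threshold $s\eps\log(1/\eps)$ in the statement. Carefully tracking these transient-decay budgets through the induction, so that each step has its prerequisites available without losing powers of $\eps$, is the technical crux I expect to spend most effort on.
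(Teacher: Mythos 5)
Your plan is essentially the paper's proof. Stage 1 is identical, and the mechanism you identify in Stage 3 --- each additional time derivative carries an $O(\eps^{-1})$ initial transient that must be damped by the exponential factor over a waiting time of order $\eps\log(1/\eps)$, accumulating to $s\eps\log(1/\eps)$ --- is exactly the origin of the threshold in the paper. The ``technical crux'' you defer is handled there by a device that avoids tracking transient budgets through the induction: writing the energy identity for $(\partial_t\partial_x^r u,\partial_t\partial_x^r w)$ with the dissipation recast as $-\frac{1}{\eps}(\text{full energy})+\frac{1}{\eps}(1-b^2)\|\partial_t\partial_x^r u\|^2$, the Duhamel forcing is $O(E_0)$ uniformly because $\partial_t\partial_x^r u=-b\partial_x^{r+1}u-\partial_x^{r+1}w$ is controlled by Stage 1, so $\|\partial_t\partial_x^r u(t_1)\|^2+\|\partial_t\partial_x^r w(t_1)\|^2\le CE_0$ at $t_1=\eps\log(1/\eps)$; one then sets $\tilde u(t)=\partial_t u(t+t_1)$, $\tilde w(t)=\partial_t w(t+t_1)$, observes that $(\tilde u,\tilde w)$ solves the same system \eqref{eq1} with $H^{s-1}$ data bounded by $CE_0$, and invokes the induction hypothesis on $s$. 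One small correction to your Stage 2: your Gronwall argument needs $e^{-t/\eps}\|\partial_x^k w(0)\|^2\le \eps^2E_0$, i.e.\ $t\ge 2\eps\log(1/\eps)$, which overshoots the stated threshold when $s=1$; the paper instead obtains \eqref{thm_reg_2} for all $r_1$ at once, purely algebraically, from \eqref{thm_reg_1} and the identity $w=-\eps(\partial_t w+\partial_x((1-b^2)u-bw))$ --- you should do the same and drop Stage 2.
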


\begin{proof}
First of all, for any integer $s\geq 0$, $(\partial_x^s u,\partial_x^s w)$ satisfies the same system \eqref{eq1}, thus also has the energy estimate \eqref{energyd}. This implies (\ref{energy1}).

We then prove that \eqref{thm_reg_0} implies \eqref{thm_reg_1} by induction on $s$. The case $s=0$ clearly follows from \eqref{energy1}. Now assume \eqref{thm_reg_0} implies \eqref{thm_reg_1} with $s$ replaced by $s-1$ in both equations. We aim to prove that \eqref{thm_reg_0} implies \eqref{thm_reg_1} in the case of $s$. Since for any $0\le r \le s-1$, $(\partial_t\partial_x^r u,\partial_t\partial_x^r w)$ satisfies the same system \eqref{eq1}, by (\ref{energy}) one has
\begin{equation}\label{energy2}\begin{split}
& \partial_t \frac{1}{2}\left((1-b^2)\|\partial_t \partial_x^r u\|^2+\|\partial_t \partial_x^r w\|^2\right) = -\frac{1}{\eps}\|\partial_t \partial_x^r w\|^2  \\
= & -\frac{1}{\eps}\left((1-b^2)\|\partial_t \partial_x^r u\|^2+\|\partial_t \partial_x^r w\|^2\right) + \frac{1}{\eps}(1-b^2)\|\partial_t \partial_x^r u\|^2.
\end{split}\end{equation}
Notice that $\partial_t \partial_x^r u = -b\partial_x^{r+1} u - \partial_x^{r+1} w$. Thus by \eqref{energy1},
\begin{equation}
\|\partial_t \partial_x^r u(t)\|^2 \le CE_0.
\end{equation}
Similarly, from $\partial_t \partial_x^r w = -(1-b^2)\partial_x^{r+1}u+b\partial_x^{r+1}w -\frac{1}{\eps}\partial_x^rw$ one has
\begin{equation}
\|\partial_t \partial_x^r w(t)\|^2 \le \frac{1}{\eps^2}CE_0.
\end{equation}
Then by \eqref{energy2},
\begin{equation}\begin{split}
& \frac{1}{2}\left((1-b^2)\|\partial_t \partial_x^r u(t)\|^2+\|\partial_t \partial_x^r w(t)\|^2\right)\\
= & e^{-2t/\eps}\frac{1}{2}((1-b^2)\|\partial_t \partial_x^r u(0)\|^2+\|\partial_t \partial_x^r w(0)\|^2)  + \frac{1}{\eps}(1-b^2)\int_0^t e^{-2(t-\tau)/\eps}\|\partial_\tau \partial_x^r u(\tau)\|^2\rd{\tau} \\
\le & \left(e^{-2t/\eps}\frac{1}{\eps^2} +1\right)CE_0.
\end{split}\end{equation}
This implies
\begin{equation}\label{energy3}
\|\partial_t \partial_x^r u(t_1)\|^2+\|\partial_t \partial_x^r w(t_1)\|^2 \le CE_0,\quad t_1 = \eps \log(1/\eps).
\end{equation}
Define
\begin{equation}
\tilde{u}(t) = \partial_t u(t+t_1),\quad \tilde{w}(t) = \partial_t w(t+t_1).
\end{equation}
Then $(\tilde{u},\tilde{w})$ also satisfies \eqref{eq1}, and \eqref{energy3} means
\begin{equation}
\|\tilde{u}(0)\|_{H^{s-1}}^2 + \|\tilde{w}(0)\|_{H^{s-1}}^2 \le C E_0 .
\end{equation}
By the induction hypothesis, we have the estimate
\begin{equation}
\|\partial_t^{r_1}\partial_x^{r_2} \tilde{u}(t)\|^2 + \|\partial_t^{r_1}\partial_x^{r_2} \tilde{w}(t)\|^2 \le CE_0,\quad r_1+r_2 \le s-1,
\end{equation}
for all $t\ge (s-1)\eps \log(1/\eps)$. In view of the definition of $(\tilde{u},\tilde{w})$, this together with \eqref{energy1} implies \eqref{thm_reg_1}.

Finally, \eqref{thm_reg_2} follows from \eqref{thm_reg_1} using the relation $w = -\eps(\partial_t w + \partial_x ((1-b^2)u-bw))$.
\end{proof}

%%%%%%%%%%%%%%%%%%%%%%%%%%%%%%%%%%%%%%%
\subsection{Spatial discretization and fully discrete scheme}

For the spatial discretization, we apply the Fourier-Galerkin spectral method. Consider the space of trigonometric polynomials of degree up to $N$:
\begin{equation}\label{P}
\mathbb{P}_N=\text{span} \{ e^{ikx} | -N\leq k \leq N\},
\end{equation} 
equipped with inner product
\begin{equation}
\langle f, g \rangle=\frac{1}{2\pi }\int  f\bar{g}\,\rd{x}.
\end{equation}
For a given function $f(x)$, its projection $\mathcal{P}_Nf$ is defined as
\begin{equation}
\mathcal{P}_N f(x) =\sum_{k=-N}^N \hat{f}_k e^{ikx} \in \mathbb{P}_N, \quad \hat{f}_k=\langle f,e^{ikx}\rangle.
\end{equation}
For the projection operator, we have the following basic facts, whose proof can be found in \cite{HGG07} for instance.

\begin{lemma} \label{adjoint}
$\mathcal{P}_N$ is self-adjoint, i.e., for any functions $f(x)$ and $g(x)$, there holds
\begin{equation}
\langle \mathcal{P}_Nf,g\rangle =\langle f,\mathcal{P}_Ng\rangle.
\end{equation}
\end{lemma}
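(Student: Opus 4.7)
The plan is to prove the identity by a direct Fourier-series computation, exploiting the orthogonality of the exponentials $\{e^{ikx}\}_{k\in\mathbb{Z}}$ under the given inner product. The key observation is that $\langle e^{ikx}, e^{i\ell x}\rangle = \delta_{k\ell}$, so that if $\phi \in \mathbb{P}_N$ and $\psi$ is any square-integrable function, then $\langle \phi, \psi\rangle = \langle \phi, \mathcal{P}_N \psi\rangle$, since the contribution from Fourier modes with $|k|>N$ vanishes.

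First I would write, for each fixed $k$ with $-N\le k\le N$,
\begin{equation*}
\hat{f}_k = \langle f, e^{ikx}\rangle, \qquad \overline{\hat{g}_k} = \overline{\langle g, e^{ikx}\rangle} = \langle e^{ikx}, g\rangle.
\end{equation*}
Then I would expand both sides of the claimed identity. For the left-hand side, inserting the definition of $\mathcal{P}_N f$ and using linearity in the first argument gives
\begin{equation*}
\langle \mathcal{P}_N f, g\rangle = \Big\langle \sum_{k=-N}^N \hat{f}_k e^{ikx}, g\Big\rangle = \sum_{k=-N}^N \hat{f}_k \langle e^{ikx}, g\rangle = \sum_{k=-N}^N \hat{f}_k\,\overline{\hat{g}_k}.
\end{equation*}
For the right-hand side, conjugate-linearity in the second argument yields
\begin{equation*}
\langle f, \mathcal{P}_N g\rangle = \Big\langle f, \sum_{k=-N}^N \hat{g}_k e^{ikx}\Big\rangle = \sum_{k=-N}^N \overline{\hat{g}_k}\,\langle f, e^{ikx}\rangle = \sum_{k=-N}^N \overline{\hat{g}_k}\,\hat{f}_k,
\end{equation*}
which matches the previous expression.

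There is no real obstacle here; the only step requiring care is to track the complex conjugation in the inner product $\langle f,g\rangle = \frac{1}{2\pi}\int f\bar{g}\,\mathrm{d}x$, so that the Fourier coefficients are pulled out of each slot consistently. As an alternative presentation I might prefer the more structural argument: write $g = \mathcal{P}_N g + (I-\mathcal{P}_N)g$, observe that $\mathcal{P}_N f \in \mathbb{P}_N$ is orthogonal to $(I-\mathcal{P}_N)g$, and conclude $\langle \mathcal{P}_N f, g\rangle = \langle \mathcal{P}_N f, \mathcal{P}_N g\rangle$; the same decomposition applied to $f$ gives $\langle f, \mathcal{P}_N g\rangle = \langle \mathcal{P}_N f, \mathcal{P}_N g\rangle$, and equality follows. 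Either route delivers the claim in a few lines and makes transparent that self-adjointness is inherited from the fact that $\mathcal{P}_N$ is the orthogonal projector onto $\mathbb{P}_N$.
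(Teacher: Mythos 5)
Your proof is correct: both the direct Fourier-coefficient computation and the alternative orthogonal-decomposition argument are valid, and you track the conjugation in the sesquilinear inner product properly. The paper itself does not supply a proof of this lemma — it simply cites a standard reference — so there is nothing to compare against; your argument is exactly the routine one such a reference would give, with the structural version (self-adjointness of an orthogonal projector) being the cleanest way to present it.
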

%\begin{proof}
%It is obvious by noting that
%\begin{align}
%&\langle \mathcal{P}_Nf,g\rangle=\langle \mathcal{P}_Nf,g-\mathcal{P}_Ng\rangle +\langle \mathcal{P}_Nf, \mathcal{P}_Ng\rangle=\langle \mathcal{P}_Nf, \mathcal{P}_Ng\rangle,\\
%&\langle f, \mathcal{P}_Ng\rangle=\langle  f-\mathcal{P}_Nf,\mathcal{P}_Ng\rangle +\langle \mathcal{P}_Nf, \mathcal{P}_Ng\rangle=\langle \mathcal{P}_Nf, \mathcal{P}_Ng\rangle.
%\end{align}
%\end{proof}

\begin{lemma} \label{Fourier1}
For any $2\pi$-periodic function $f(x)\in H^s[0,2\pi]$, there holds
\begin{equation}
\|(I-\mathcal{P}_N)f\|\leq \frac{1}{N^s}\|f\|_{H^s}.
\end{equation}
\end{lemma}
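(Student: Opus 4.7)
The plan is to rely on the Fourier characterization of the $H^s$ norm together with Parseval's identity, which makes this estimate essentially a one-line computation in frequency space. First I would expand $f$ in its full Fourier series $f(x) = \sum_{k\in\mathbb{Z}} \hat f_k e^{ikx}$, so that $\mathcal{P}_N f$ keeps only the modes $|k|\le N$ and
\begin{equation*}
(I-\mathcal{P}_N)f(x) = \sum_{|k|>N} \hat f_k e^{ikx}.
\end{equation*}
By Parseval's identity applied to the norm convention used in the paper, $\|(I-\mathcal{P}_N)f\|^2$ reduces (up to the appropriate $2\pi$ factor consistent with the inner product normalization) to $\sum_{|k|>N} |\hat f_k|^2$.

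Next I would exploit the trivial but sharp inequality $1 \le |k|^{2s}/N^{2s} \le (1+|k|^2)^s/N^{2s}$ valid for every $|k|>N$ and $s\ge 0$. Pulling out the factor $1/N^{2s}$ from the tail sum yields
\begin{equation*}
\sum_{|k|>N} |\hat f_k|^2 \le \frac{1}{N^{2s}} \sum_{|k|>N} (1+|k|^2)^s |\hat f_k|^2 \le \frac{1}{N^{2s}}\sum_{k\in\mathbb{Z}} (1+|k|^2)^s |\hat f_k|^2,
\end{equation*}
and the last sum is exactly $\|f\|_{H^s}^2$ by the standard Fourier characterization of Sobolev norms on the torus. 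Combining these two steps delivers $\|(I-\mathcal{P}_N)f\|\le N^{-s}\|f\|_{H^s}$.

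There is no real obstacle here: the only point to keep consistent is the normalization convention, since the inner product in the paper carries a $\frac{1}{2\pi}$ prefactor while the $L^2$ norm does not, so one has to be careful that the $2\pi$ factors from Parseval on one side and from the definition of the Fourier coefficients on the other cancel correctly to produce the clean constant $1$ in the stated bound. Given that the paper itself refers to \cite{HGG07} for the proof, I would simply record the above computation concisely rather than re-deriving Parseval from scratch.
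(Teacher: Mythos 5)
Your argument is correct and is exactly the standard Parseval/frequency-space proof that the paper itself omits by citing \cite{HGG07}; the key inequality $|k|^{2s}\ge N^{2s}$ for $|k|>N$ and the cancellation of the $2\pi$ normalization factors are both handled properly. Nothing further is needed.
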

%\begin{proof}
%By Parseval's identity,
%\begin{equation}\begin{split}
%\|(I-\mathcal{P}_N)f\|^2& =2\pi \sum_{|k|> N} |\hat{f}_k|^2=2\pi \sum_{|k|> N} \frac{1}{k^{2s}}k^{2s}|\hat{f}_k|^2\leq 2\pi \frac{1}{N^{2s}} \sum_{|k|> N} k^{2s}|\hat{f}_k|^2\\
%&\leq 2\pi \frac{1}{N^{2s}} \sum_{|k|\geq 0} k^{2s}|\hat{f}_k|^2=\frac{1}{N^{2s}} \|f^{(s)}\|^2\leq \frac{1}{N^{2s}} \|f\|_{H^{s}}^2.
%\end{split}\end{equation}
%\end{proof}
\begin{lemma} \label{Fourier2}
For any $\phi(x)\in \mathbb{P}_N$, there holds
\begin{equation}
\| \phi^{(s)}\|\leq N^{s}\|\phi\|.
\end{equation}
\end{lemma}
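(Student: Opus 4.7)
The plan is to exploit the Fourier representation of $\phi$ directly, since $\mathbb{P}_N$ is defined as a finite-dimensional span of exponentials on which differentiation acts diagonally. First I would write $\phi(x)=\sum_{k=-N}^{N}\hat{\phi}_k e^{ikx}$, so that termwise differentiation gives $\phi^{(s)}(x)=\sum_{k=-N}^{N}(ik)^{s}\hat{\phi}_k e^{ikx}$, which is a legal operation because the sum is finite.

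Next I would invoke Parseval's identity relative to the normalized inner product $\langle f,g\rangle=\frac{1}{2\pi}\int f\bar{g}\,\rd{x}$ used in the paper. Under this convention $\{e^{ikx}\}$ is orthonormal, so the $L^{2}$ norm (without the $1/(2\pi)$ factor, as fixed at the start of Section~3) satisfies $\|\phi\|^{2}=2\pi\sum_{k=-N}^{N}|\hat{\phi}_k|^{2}$ and analogously $\|\phi^{(s)}\|^{2}=2\pi\sum_{k=-N}^{N}|k|^{2s}|\hat{\phi}_k|^{2}$.

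The estimate then follows from the elementary bound $|k|^{2s}\le N^{2s}$ for every $|k|\le N$:
\begin{equation*}
\|\phi^{(s)}\|^{2}=2\pi\sum_{k=-N}^{N}|k|^{2s}|\hat{\phi}_k|^{2}\le N^{2s}\cdot 2\pi\sum_{k=-N}^{N}|\hat{\phi}_k|^{2}=N^{2s}\|\phi\|^{2}.
\end{equation*}
Taking square roots yields the stated inequality. There is essentially no obstacle here: the result is the standard Bernstein-type inverse inequality for trigonometric polynomials, and the degree cutoff at $N$ is exactly what makes the pointwise bound $|k|\le N$ available in the Parseval sum. The only things to be mindful of are the normalization of the inner product (to keep the $2\pi$ factors consistent) and the fact that termwise differentiation is justified by finiteness of the sum.
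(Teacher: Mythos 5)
Your proof is correct and is the standard Bernstein-type argument; the paper itself gives no proof of this lemma, simply citing \cite{HGG07}, where essentially this same Fourier--Parseval argument appears. The handling of the $2\pi$ normalization is consistent and immaterial since the constant cancels on both sides.
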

%\begin{proof}
%By Parseval's identity,
%\begin{equation}
%\| \phi^{(s)}\|^2=2\pi \sum_{k=-N}^N k^{2s}|\hat{\phi}_k|^2\leq 2\pi N^{2s}\sum_{k=-N}^N |\hat{\phi}_k|^2=N^{2s}\|\phi\|^2.
%\end{equation}
%\end{proof}

The Fourier-Galerkin spectral method for (\ref{eq1}) seeks to approximate $u$, $w$ as
\begin{equation} \label{FG}
u(t,x) \approx \sum_{k=-N}^N u_k(t)e^{ikx} =: u_N(t,x), \quad w(t,x) \approx \sum_{k=-N}^N w_k(t)e^{ikx} =: w_N(t,x).
\end{equation}
Substituting (\ref{FG}) into \eqref{eq1} and conducting the Galerkin projection yields
\begin{equation} \label{FG1}
\left\{\begin{split}
& \partial_t u_N + \partial_x (bu_N+w_N) = 0, \\
& \partial_t w_N + \partial_x ((1-b^2)u_N-bw_N) = -\frac{1}{\eps}w_N,
\end{split}\right.\end{equation}
i.e., $u_N,w_N$ still satisfy \eqref{eq1}. It is worth mentioning that in the actual numerical method, the coefficients $u_k(t)$ and $w_k(t)$ are the sought after quantities. They satisfy the same equation (\ref{FG1}) and the initial condition $u_k(0)=\mathcal{P}_N u(0,x)$, $w_k(0)=\mathcal{P}_N w(0,x)$. Applying the $q$-th order IMEX-BDF scheme to (\ref{FG1}), similarly as (\ref{BDF0}), yields the fully discrete scheme
\begin{equation}\label{BDF}\left\{\begin{split}
& \sum_{i=0}^q\alpha_i U_N^{n+i} + \Delta t \sum_{i=0}^{q-1}\gamma_i \partial_x (bU_N^{n+i}+W_N^{n+i}) = 0, \\
& \sum_{i=0}^q\alpha_i W_N^{n+i} + \Delta t \sum_{i=0}^{q-1}\gamma_i \partial_x ((1-b^2)U_N^{n+i}-bW_N^{n+i}) = -\frac{\beta\Delta t}{\eps}W_N^{n+q}, \\
\end{split}\right.\end{equation}
where $U_N^n, W_N^n$ are the fully discrete solutions. For the starting values, we assume they can be obtained with very high accuracy (in time) using IMEX-RK schemes so that the error only comes from the spatial discretization, i.e., we assume
\begin{equation}
U_N^i=\mathcal{P}_N u(t_i,x), \quad  W_N^i=\mathcal{P}_N w(t_i,x), \quad i=0,\dots,q-1,
\end{equation}
where $u$ and $w$ are the exact solutions. 

%\begin{remark}
%We do not expect to obtain any stability result for the semi-discrete IMEX-BDF scheme \eqref{BDF0}, since its limiting scheme \eqref{limit_sch}, similar to the forward Euler scheme, is unstable if the spatial direction is not discretized. This is different from the IMEX-BDF schemes for advection-diffusion equations considered in {\color{red} some references}, where a sufficiently large diffusion is assumed, to stabilize the explicit semi-discrete part for the advection term.
%\end{remark}

\subsection{Uniform stability}
\label{subsec:stability}

To analyze the stability of \eqref{BDF}, we adopt the multiplier technique. We mention that this technique appeared already in early works \cite{Dahlquist78, NO81} but it was only recently used in the analysis of BDF schemes for parabolic equations, see for instance \cite{LMV13, AL15}. When it comes to our problem (\ref{linearhyp}), the existing multipliers, including the recently proposed ones \cite{AK16}, cannot be applied due to the special convection-relaxation structure of the equation. Therefore, we invent a new multiplier as stated in the following lemma.

\begin{lemma}\label{lem_G}
For $q=1,2,3,4$, there exist positive-definite Hermitian form $G(u_1,\dots,u_q)=\sum_{i,j=1}^q g_{ij}u_i\bar{u}_j$, semi-positive-definite Hermitian form $A(u_1,\dots,u_{q-1})=\sum_{i,j=1}^{q-1} a_{ij}u_i\bar{u}_j$, linear forms $\sum_{i=1}^{q-1}\eta_i u_i$, $\sum_{i=1}^q c_i u_i$, where all coefficients involved are real, and real constants $d_1>0$ and $d_2$, such that
\begin{equation}\label{lem_G_1}\begin{split}
& \Re\left(\Big(\bar{u}_q-\sum_{j=1}^{q-1}\eta_j \bar{u}_j\Big)\sum_{i=0}^q\alpha_i u_i\right) =  G(u_1,\dots,u_q)-G(u_0,\dots,u_{q-1}) + d_1\left|u_q-\sum_{i=1}^{q-1}\eta_i u_i - d_2\sum_{i=0}^{q-1}\gamma_i u_i\right|^2 ,
\end{split}\end{equation}
and
\begin{equation}\label{lem_G_2}
\Re\left(\Big(\bar{u}_q-\sum_{i=1}^{q-1}\eta_i \bar{u}_i\Big)u_q\right) = A(u_2,\dots,u_q) - A(u_1,\dots,u_{q-1}) + \left|\sum_{i=1}^q c_i u_i\right|^2,
\end{equation}
hold for any $u_0,u_1,\dots,u_q\in \mathbb{C}$, where $\Re$ denotes the real part of a complex number.
\end{lemma}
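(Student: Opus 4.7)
The plan is to prove the lemma case by case for $q=1,2,3,4$. Both identities are Hermitian-form equalities in finitely many complex variables, so after parametrizing the unknowns (the multipliers $\eta_i$, the entries $g_{ij}$ and $a_{ij}$, the coefficients $c_i$, and the scalars $d_1,d_2$) one can match the coefficients of $u_i\bar u_j$ on the two sides to obtain a finite linear system, whose consistency is the content of the lemma and whose remaining freedom is fixed by the rank-one and positivity requirements.

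For identity \eqref{lem_G_1}, I would expand the left-hand side into a Hermitian form $Q_1(u_0,\ldots,u_q)$ whose coefficients depend only on $\alpha$ and on the $\eta_j$. The telescoping piece $G(u_1,\ldots,u_q)-G(u_0,\ldots,u_{q-1})$ on the right-hand side contributes only to monomial pairs touching $u_0$ or $u_q$, and with a built-in antisymmetry under the shift $u_i \mapsto u_{i-1}$. Requiring that the residual $Q_1-(G(u_1,\ldots,u_q)-G(u_0,\ldots,u_{q-1}))$ be \emph{rank one} and aligned with the prescribed direction $u_q-\sum_i\eta_i u_i - d_2\sum_i\gamma_i u_i$ forces all $2\times 2$ minors of the residual's Gram matrix to vanish. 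These algebraic constraints pin down $\eta_i$ and $d_2$ up to a short list of candidates; the scalar $d_1$ and the entries $g_{ij}$ of $G$ are then read off linearly. Positive-definiteness of $G$ is finally checked through its $q\times q$ principal minors.

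For identity \eqref{lem_G_2} the construction is more direct. One writes
\[
\Re\Bigl((\bar u_q-\sum_{i=1}^{q-1}\eta_i\bar u_i)u_q\Bigr) = |u_q|^2 - \sum_{i=1}^{q-1}\eta_i\,\Re(\bar u_i u_q),
\]
chooses $c_i$ so that $|\sum_i c_i u_i|^2$ reproduces the coefficients of the cross-terms $\Re(\bar u_i u_q)$ for $i<q$, and absorbs the remainder into the telescoping difference $A(u_2,\ldots,u_q)-A(u_1,\ldots,u_{q-1})$. The semi-positive-definiteness of the resulting $A$ then amounts to a mild size condition on the $\eta_i$, which I would verify by inspecting its $(q-1)\times(q-1)$ matrix once the $\eta_i$ from the previous step have been fixed.

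The main obstacle is compatibility: the $\eta_i$ are heavily constrained in \eqref{lem_G_1} by the combination of the rank-one residual condition and the positive-definiteness of $G$, and they must simultaneously admit a semi-positive-definite $A$ in \eqref{lem_G_2}. This is tight for $q=4$, where the admissible window for a Nevanlinna--Odeh-type multiplier is narrow. My strategy is therefore to solve \eqref{lem_G_1} first, recording the feasible family of $\eta_i$, and to select within this family a choice that also validates \eqref{lem_G_2}; for $q=1$ the multiplier is empty and only $d_1,d_2$ need to be matched, while for $q=2,3$ one expects the classical BDF-stability multipliers to work almost verbatim. Once $\eta_i,d_1,d_2$ are fixed for each $q$, the two identities reduce to polynomial identities in the $u_i$ verifiable by direct expansion, and the positivity claims reduce to checking a handful of scalar inequalities.
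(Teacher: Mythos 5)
Your strategy is the same as the paper's -- reduce to a real coefficient-matching problem, construct the data case by case for $q=1,2,3,4$, and then verify the two identities by direct expansion together with positivity checks on $G$ and $A$. However, as written your proposal is a plan rather than a proof: the lemma is a pure existence statement, and its entire content is that the coupled algebraic system you describe actually \emph{has} a solution with $G$ positive-definite, $A$ semi-positive-definite, and $d_1>0$. You acknowledge this yourself (``whose consistency is the content of the lemma'') but never establish it. For $q=1,2$ the classical Nevanlinna--Odeh multipliers do work essentially verbatim, as you expect. For $q=3$ and $q=4$, though, nothing in your argument shows that the feasible family of $\eta_i$ produced by the rank-one residual condition in \eqref{lem_G_1} is nonempty, let alone that it intersects the set of $\eta_i$ admitting a semi-positive-definite $A$ in \eqref{lem_G_2}. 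In the paper this intersection is exhibited explicitly: for $q=3$ the coefficients involve $\sqrt{30}$ and a root $z_*$ of an explicit degree-8 polynomial, and the semi-positive-definiteness of $A$ is verified by computing $a_{11}a_{22}-a_{12}^2$ modulo the relation $\phi(z_*)=0$ and bounding $z_*\in(0.106,0.107)$; for $q=4$ the coefficients are produced numerically to $10^{-31}$ accuracy. None of this is recoverable from your outline.

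Two further points deserve attention. First, the decomposition in \eqref{lem_G_2} is not a ``mild size condition on the $\eta_i$'': writing $\Re\bigl((\bar u_q-\sum_i\eta_i\bar u_i)u_q\bigr)$ as a shifted telescoping difference of a semi-positive-definite form plus a perfect square is a Fej\'er--Riesz-type factorization constraint, and its compatibility with the $\eta_i$ forced by \eqref{lem_G_1} is exactly the delicate part (this is why the paper needs $z_*$ at all). Second, the residual in \eqref{lem_G_1} is not merely required to be rank one; it must be $d_1$ times the square of the \emph{specific} combination $u_q-\sum_i\eta_iu_i-d_2\sum_i\gamma_iu_i$ involving the explicit-part polynomial $\gamma$, which is the new feature of this lemma relative to the classical multiplier results, and which further restricts the solution set. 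To complete the proof along your lines you would need to either exhibit the coefficients explicitly (as the paper does) or give an abstract solvability argument for the coupled system, and neither is present.
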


%The choice of the coefficients may not be unique. However, by analyzing the degrees of freedom, it is clear that there should only be finitely many choices.

\begin{proof}
First of all, it is easy to see that to prove \eqref{lem_G_1} and \eqref{lem_G_2} it suffices to show the following 
equalities hold for any real numbers $u_0,u_1,\dots,u_q$:
\begin{equation}\label{lem_G_11}\begin{split}
& \Big(u_q-\sum_{j=1}^{q-1}\eta_j u_j\Big)\sum_{i=0}^q\alpha_i u_i=  G(u_1,\dots,u_q)-G(u_0,\dots,u_{q-1}) + d_1\left (u_q-\sum_{i=1}^{q-1}\eta_i u_i - d_2\sum_{i=0}^{q-1}\gamma_i u_i\right)^2 ,
\end{split}\end{equation}
and
\begin{equation}\label{lem_G_21}
\Big(u_q-\sum_{i=1}^{q-1}\eta_i u_i\Big)u_q = A(u_2,\dots,u_q) - A(u_1,\dots,u_{q-1}) + \left(\sum_{i=1}^q c_i u_i\right)^2.
\end{equation}

For $q=1$, the coefficients are given by
\begin{equation}
g_{11} = \frac{1}{2}, \quad d_1 = \frac{1}{2}, \quad d_2=1, \quad c_1 = 1.
\end{equation}
For $q=2$, the coefficients are given by
\begin{equation}
\eta_1 = 0,\quad g_{11} = \frac{1}{6},\quad g_{22} = \frac{5}{6},\quad g_{12} = -\frac{1}{3}, \quad d_1 = \frac{1}{6},\quad d_2 = \frac{3}{2},\quad a_{11}=0,\quad c_1=0, \quad  c_2 = 1.
\end{equation}
For these two cases, \eqref{lem_G_11}, \eqref{lem_G_21}, as well as the (semi-)positive-definiteness of $G$ and $A$ can be checked easily. In fact, these numbers are well-known, see for instance \cite{HW}.

For $q=3$, the coefficients are given by irrational numbers\footnote{These values are obtained by solving certain algebraic systems using the symbolic toolbox of MATLAB. They serve as one admissible set of solutions and we do not claim their uniqueness and optimality. Same can be said for the values provided in Appendix~\ref{appendix:B}  for $q=4$.}:
\begin{equation}\begin{split}
& g_{11} = \frac{\sqrt{30}}{187} + \frac{8}{187},\quad g_{22} = \frac{\sqrt{30}}{34} + \frac{95}{187},\quad g_{33} = \frac{\sqrt{30}}{22} + \frac{7}{11}, \\
& g_{12} = -\frac{3\sqrt{30}}{187} - \frac{24}{187},\quad g_{13} = \frac{3\sqrt{30}}{187} + \frac{24}{187},\quad g_{23} = -\frac{6\sqrt{30}}{187} - \frac{9}{17}, \\
& \eta_{1} = \frac{\sqrt{30}}{17} - \frac{9}{17},\quad \eta_{2} = -\frac{2\sqrt{30}}{17} + \frac{18}{17}, \quad d_{1} = -\frac{\sqrt{30}}{22} + \frac{4}{11},\quad d_{2} =  \frac{11\sqrt{30}}{102} + \frac{44}{51}.
\end{split}\end{equation}
\begin{equation}\begin{split}
a_{11} = & \Big( (5576634533850159812-1018149509409713088\sqrt{30})z_*^6 \\
& + (-827564175794699168+151091855378090876\sqrt{30})z_*^4 \\
& + (1317402834013463958-240523749880736072\sqrt{30})z_*^2 \\
& + (-150042582540986748+27393902345391585\sqrt{30}) \Big) \\
& \Big/ (-1011078865344820767 + 184596900652059714\sqrt{30}), \\
a_{12} = & -\Big( (20162952-3576664\sqrt{30})z_*^6 + (-11669820+2036872\sqrt{30})z_*^4 \\
& + (9540978-1747158\sqrt{30})z_*^2  + (-4604391+840294\sqrt{30}) \Big)  \Big/ (-7213644 + 1314780\sqrt{30}), \\
a_{22} = & 1-z_*^2, \\
c_{1} = & \Big( (1454248-235824\sqrt{30})z_*^7  + (-268192+25432\sqrt{30})z_*^5 \\
& + (312732-61336\sqrt{30})z_*^3  + (-37632+6618\sqrt{30})z_* \Big)  \Big/ (-106083 + 19335\sqrt{30}), \\
c_{2} = & \Big( -39304z_*^7  + (28900+1156\sqrt{30})z_*^5 \\
& + (-8466+1904\sqrt{30})z_*^3 + (4617-819\sqrt{30})z_* \Big) \Big/ (-3078 + 546\sqrt{30}), \\
c_3 = & z_*, 
\end{split}\end{equation}
where $z_*$ is the unique real root of the polynomial
\begin{equation}\begin{split}
\phi(z) = & (-49444432+8018016\sqrt{30})z^8 - (-9118528+864688\sqrt{30})z^6 + (-10632888+2085424\sqrt{30})z^4 \\
&  - (-1279488+225012\sqrt{30})z^2 + (-1534797+280098\sqrt{30})
\end{split}\end{equation}
that lies in the interval $(0.106,0.107)$.\footnote{The approximate value of $z_*$ is given by
\begin{equation*}
z_*\approx 0.10618875349491630708729892823342.
\end{equation*}} \eqref{lem_G_11} and \eqref{lem_G_21} can be checked directly (by plugging in the coefficients and comparing like terms on both hand sides). To check the positive-definiteness of $G$, we compute the characteristic polynomial of $G$:
\begin{equation}
\chi_G(\lambda) = \lambda^3 + \left(-\frac{15\sqrt{30}}{187}-\frac{222}{187}\right)\lambda^2 + \left(\frac{312\sqrt{30}}{34969}+\frac{4533}{69938}\right)\lambda + \left(-\frac{45\sqrt{30}}{13078406}-\frac{656}{6539203}\right),
\end{equation} 
and it is clear that $\chi_G(\lambda)<0$ for $\lambda\le 0$. Therefore $\chi_G$ has only positive roots.\footnote{In fact, the smallest eigenvalue of $G$ is approximately
\begin{equation*}
\lambda_1(G) \approx 0.001064408628491745818998719988681.
\end{equation*}
}
%To check \eqref{lem_G_21}, we first write the difference between LHS and RHS of \eqref{lem_G_21} as a quadratic polynomial in $u_1,u_2,u_3$ with coefficients being polynomials in $z$, viewing $z$ as a variable. There are six such polynomials in $z$, as the coefficients of $u_1^2,u_2^2,u_3^2,u_1u_2,u_1u_3,u_2u_3$. Then it can be checked that $\phi(z)$ is a factor of all these polynomials. This shows that the root $z_*$ of $\phi(z)$ will make \eqref{lem_G_21} hold. 
Since $A$ is a quadratic form of two variables, to check the positive-definiteness of $A$, it suffices to check $a_{22}>0$ and $a_{11}a_{22}-a_{12}^2 > 0$, for the root $z_*$ of $\phi(z)$: the first fact is clear; to check the second fact, we compute
\begin{equation}
a_{11}a_{22}-a_{12}^2 = \left(\frac{161}{578}+\frac{26\sqrt{30}}{289}\right) + \left(\frac{96}{17}+\frac{53\sqrt{30}}{51}\right) z_*^2 + \left(\frac{85}{3}+\frac{16\sqrt{30}}{3}\right) z_*^4 + \left(-\frac{112}{3}-\frac{20\sqrt{30}}{3}\right) z_*^6,
\end{equation}
by using $\phi(z_*)=0$. Since $\frac{161}{578}+\frac{26\sqrt{30}}{289} > 0.77$ and $\frac{112}{3}+\frac{20\sqrt{30}}{3} < 74$, one has
\begin{equation}
a_{11}a_{22}-a_{12}^2 > 0.77 - 74\cdot 0.107^6 > 0,
\end{equation}
by using $0.106<z_*<0.107$.\footnote{In fact, the smallest eigenvalue of $A$ is approximately
\begin{equation*}
\lambda_1(A) \approx 0.78260015292507185414401336030175.
\end{equation*}
}

For $q=4$, we list in Appendix~\ref{appendix:B} an approximate choice of coefficients which satisfy \eqref{lem_G_11} and \eqref{lem_G_21} up to an error of $10^{-31}$, with the quadratic forms $G$ and $A$ being positive-definite. One can argue rigorously as that for $q=3$. We omit the details.
\end{proof}

\begin{remark}
We did not consider the case with $q>4$ because for kinetic equations \eqref{kinetic} that we are mostly interested in, the spatial error often dominates even using high order methods such as the fifth order WENO method \cite{Shu98} and usually up to third order accuracy in time would suffice (see also the numerical results in Section~\ref{subsec:BGK}).
\end{remark}

\begin{remark}
Compared with the multipliers constructed in \cite{NO81} for $q\le 5$, the main feature of Lemma \ref{lem_G} is a precise quantification of the last square term in \eqref{lem_G_1}: it is exactly the square of a linear combination of the multiplier $u_q-\sum_{i=1}^{q-1}\eta_i u_i$ and the explicit part $\sum_{i=0}^{q-1}\gamma_i u_i$ appearing in the IMEX-BDF scheme. This will be essential to the proof of uniform stability, for handling the contribution from the convection term.
\end{remark}

Next we state our result on the uniform stability of the IMEX-BDF scheme \eqref{BDF}:

\begin{theorem}[Uniform stability of IMEX-BDF schemes]\label{thm_stab}
For $q=1,2,3,4$, under the CFL condition $\Delta t \le c_{CFL}/N^2$ for any constant $c_{CFL}>0$, the IMEX-BDF scheme \eqref{BDF} is uniformly stable, in the sense that
\begin{equation} \label{uniform}
\|U_N^n\|^2 + \|W_N^n\|^2 \le C\sum_{i=0}^{q-1}\left(\|U_N^i\|^2+\Big(1+\frac{\Delta t}{\eps}\Big)\|W_N^i\|^2\right),
\end{equation}
for any $n$ such that $t_n=T_0+n\Delta t\leq T$, where $C$ is a constant independent of $\varepsilon$, $N$ and $\Delta t$.
\end{theorem}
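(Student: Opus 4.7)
The argument is a multiplier-based energy estimate carried out mode-by-mode. Since the Fourier basis diagonalizes $\partial_x$, for each wave number $k$ with $|k|\le N$ the scheme \eqref{BDF} reduces to a pair of scalar recursions for $\hat U_k^n,\hat W_k^n$. Writing $\tilde u_k:=\sum_{i=0}^{q-1}\gamma_i\hat U_k^{n+i}$ and $\tilde w_k:=\sum_{i=0}^{q-1}\gamma_i\hat W_k^{n+i}$ for the explicit extrapolations, I multiply the first recursion by $\bar{\hat U}_k^{n+q}-\sum_{j=1}^{q-1}\eta_j\bar{\hat U}_k^{n+j}$ and take the real part. Applying identity \eqref{lem_G_1} with $u_i=\hat U_k^{n+i}$ produces a telescoping increment of $G$ plus the positive penalty $d_1|E_U|^2$, where $E_U:=\hat U_k^{n+q}-\sum_j\eta_j\hat U_k^{n+j}-d_2\tilde u_k$. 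After substituting the scheme the right-hand side simplifies to $k\Delta t\,\Im\bigl(\bar E_U(b\tilde u_k+\tilde w_k)\bigr)+d_2k\Delta t\,\Im(\bar{\tilde u}_k\tilde w_k)$, the last cross term surviving because $b|\tilde u_k|^2$ is real.

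The same procedure applied to the second recursion with multiplier $\bar{\hat W}_k^{n+q}-\sum_j\eta_j\bar{\hat W}_k^{n+j}$ yields a telescoping $G$-form in $W$, the penalty $d_1|E_W|^2$, a convection contribution $k\Delta t\,\Im\bigl(\bar E_W((1-b^2)\tilde u_k-b\tilde w_k)\bigr)-(1-b^2)d_2k\Delta t\,\Im(\bar{\tilde u}_k\tilde w_k)$, and --- via identity \eqref{lem_G_2} applied to the stiff term $-\tfrac{\beta\Delta t}{\eps}\hat W_k^{n+q}$ --- the telescoping $\tfrac{\beta\Delta t}{\eps}[A(\hat W_k^{n+2},\dots)-A(\hat W_k^{n+1},\dots)]$ together with the negative penalty $-\tfrac{\beta\Delta t}{\eps}|\sum_{i=1}^q c_i\hat W_k^{n+i}|^2$. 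I then weight the first identity by $1-b^2$ and add the second, mirroring the continuous Lyapunov functional in \eqref{energy}: the two $d_2k\Delta t\,\Im(\bar{\tilde u}_k\tilde w_k)$ cross terms cancel exactly, leaving only remainders of the form $k\Delta t\,\Im(\bar E\cdot X)$ with $|X|\le C(|\tilde u_k|+|\tilde w_k|)$. Young's inequality together with $|k|\le N$ gives $|k\Delta t\,\Im(\bar E X)|\le \delta|E|^2+C(N\Delta t)^2|X|^2/(4\delta)$; choosing $\delta$ small enough absorbs the $|E|^2$ residuals into $d_1(|E_U|^2+|E_W|^2)$.

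Summing over $|k|\le N$ via Parseval and setting $\mathcal E^n:=(1-b^2)\mathcal G_U^n+\mathcal G_W^n+\tfrac{\beta\Delta t}{\eps}\mathcal A_W^n$ (the spatial $L^2$ integrals of the forms $G$ and $A$), one obtains a discrete inequality $\mathcal E^{n+1}\le\bigl(1+C(N\Delta t)^2\bigr)\mathcal E^n$, since $\|\tilde u\|^2+\|\tilde w\|^2\lesssim \sum_{i=0}^{q-1}(\|U_N^{n+i}\|^2+\|W_N^{n+i}\|^2)\lesssim \mathcal E^n$ by positive-definiteness of $G$. Iterating for $n\Delta t\le T-T_0$ yields $\mathcal E^n\le e^{CTN^2\Delta t}\mathcal E^0$, bounded under the CFL $\Delta t\le c_{CFL}/N^2$. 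Positive-definiteness of $G$ also gives $\|U_N^n\|^2+\|W_N^n\|^2\lesssim \mathcal E^{n-q+1}$, while semi-positive-definiteness of $A$ combined with positive-definiteness of $G$ bounds $\mathcal E^0\lesssim \sum_{i=0}^{q-1}[\|U_N^i\|^2+(1+\Delta t/\eps)\|W_N^i\|^2]$; the $\Delta t/\eps$ factor on the $W$-data is precisely the contribution of $\tfrac{\beta\Delta t}{\eps}\mathcal A_W^0$. Combining these yields \eqref{uniform}.

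The chief technical obstacle is ensuring the cancellation of the geometric cross term $\Im(\bar{\tilde u}_k\tilde w_k)$ across the two equations. This is not a generic feature of multiplier techniques but hinges on the precise structure of Lemma \ref{lem_G}: the explicit extrapolation $\sum\gamma_iu_i$ must appear inside the square in \eqref{lem_G_1} with a scalar coefficient $d_2$ that is common to both components, so that after the $(1-b^2)$-to-$1$ weighting the cross terms match and cancel. This is precisely why the classical multipliers of \cite{Dahlquist78,NO81} and their refinements in \cite{AK16} are inadequate here and why the new family constructed in Lemma \ref{lem_G} is needed. The parabolic CFL $\Delta t\le c_{CFL}/N^2$ is then a direct consequence of the discrete Gronwall factor $(1+C(N\Delta t)^2)^n$.
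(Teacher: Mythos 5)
Your proposal is correct and follows essentially the same route as the paper's proof: the same multiplier from Lemma \ref{lem_G}, the same weighted Lyapunov functional $(1-b^2)G_U+G_W+\frac{\beta\Delta t}{\eps}A_W$, the same add-and-subtract of $d_2\sum\gamma_i$ inside the multiplier so that the surviving cross terms cancel under the $(1-b^2)$ weighting, and the same Young's-inequality absorption into the $d_1$ penalties followed by the inverse inequality and discrete Gronwall. The only (cosmetic) difference is that you argue mode-by-mode in Fourier coefficients, where the cancellations appear as vanishing imaginary parts, whereas the paper works with spatial integrals and integration by parts; by Parseval these are identical.
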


\begin{proof}
In this proof we follow the notations from Lemma \ref{lem_G}. Denote
\begin{equation}\begin{split}
& G^n_U = \int G(U_N^n,\dots,U_N^{n+q-1})\rd{x},\quad G^n_W = \int G(W_N^n,\dots,W_N^{n+q-1})\rd{x},\\
& A^n_W = \int A(W_N^{n+1},\dots,W_N^{n+q-1})\rd{x}.
\end{split}\end{equation}

Multiplying the first equation of \eqref{BDF} by $\bar{U}_N^{n+q}-\sum_{i=1}^{q-1}\eta_i \bar{U}_N^{n+i}$, using Lemma \ref{lem_G} and integrating in $x$ gives
\begin{equation}\label{GU}\begin{split}
0 = & G^{n+1}_U-G^n_U + d_1\left\|U_N^{n+q}-\sum_{i=1}^{q-1}\eta_i U_N^{n+i} - d_2\sum_{i=0}^{q-1}\gamma_i U_N^{n+i}\right\|^2 \\
& + b\Delta t \Re\int \left(\bar{U}_N^{n+q}-\sum_{j=1}^{q-1}\eta_j \bar{U}_N^{n+j}\right)\partial_x \sum_{i=0}^{q-1}\gamma_i  U_N^{n+i} \rd{x} \\
& + \Delta t \Re\int \left(\bar{U}_N^{n+q}-\sum_{j=1}^{q-1}\eta_j \bar{U}_N^{n+j}\right)\partial_x \sum_{i=0}^{q-1}\gamma_i  W_N^{n+i} \rd{x} \\
= & G^{n+1}_U-G^n_U + d_1\left\|U_N^{n+q}-\sum_{i=1}^{q-1}\eta_i U_N^{n+i} - d_2\sum_{i=0}^{q-1}\gamma_i U_N^{n+i}\right\|^2 \\
& + b\Delta t \Re\int \left(\bar{U}_N^{n+q}-\sum_{j=1}^{q-1}\eta_j \bar{U}_N^{n+j}-d_2\sum_{j=0}^{q-1}\gamma_j  \bar{U}_N^{n+j}\right)\partial_x \sum_{i=0}^{q-1}\gamma_i  U_N^{n+i} \rd{x} \\
& + \Delta t \Re\int \left(\bar{U}_N^{n+q}-\sum_{j=1}^{q-1}\eta_j \bar{U}_N^{n+j}-d_2\sum_{j=0}^{q-1}\gamma_j  \bar{U}_N^{n+j}\right)\partial_x \sum_{i=0}^{q-1}\gamma_i  W_N^{n+i} \rd{x} \\
& + d_2\Delta t \Re\int \sum_{j=0}^{q-1}\gamma_j \bar{U}_N^{n+j}\partial_x \sum_{i=0}^{q-1}\gamma_i  W_N^{n+i} \rd{x},
\end{split}\end{equation}
where the new terms added in the last equality are actually zero due to an integration by parts and periodic boundary condition. Similar treatment for the second equation of \eqref{BDF} gives
\begin{equation}\label{GW}\begin{split}
 & G^{n+1}_W-G^n_W + d_1\left\|W_N^{n+q}-\sum_{i=1}^{q-1}\eta_i W_N^{n+i} - d_2\sum_{i=0}^{q-1}\gamma_i W_N^{n+i}\right\|^2 \\
& - b\Delta t \Re\int \left(\bar{W}_N^{n+q}-\sum_{j=1}^{q-1}\eta_j \bar{W}_N^{n+j}-d_2\sum_{j=0}^{q-1}\gamma_j \bar{W}_N^{n+j}\right)\partial_x \sum_{i=0}^{q-1}\gamma_i  W_N^{n+i} \rd{x} \\
& + (1-b^2)\Delta t \Re\int \left(\bar{W}_N^{n+q}-\sum_{j=1}^{q-1}\eta_j \bar{W}_N^{n+j}-d_2\sum_{j=0}^{q-1}\gamma_j  \bar{W}_N^{n+j}\right)\partial_x \sum_{i=0}^{q-1}\gamma_i  U_N^{n+i} \rd{x} \\
& + (1-b^2)d_2\Delta t \Re\int \sum_{j=0}^{q-1}\gamma_j  \bar{W}_N^{n+j}\partial_x \sum_{i=0}^{q-1}\gamma_i  U_N^{n+i} \rd{x} \\
= & -\frac{\beta\Delta t}{\eps}\left(A^{n+1}_W-A^n_W + \left\|\sum_{i=1}^q c_i W_N^{n+i}\right\|^2\right). 
\end{split}\end{equation}

Notice that for any $\kappa>0$, using Young's inequality,
\begin{equation}\begin{split}
& \left|\Delta t\Re\int \left(\bar{U}_N^{n+q}-\sum_{j=1}^{q-1}\eta_j \bar{U}_N^{n+j}-d_2\sum_{j=0}^{q-1}\gamma_j  \bar{U}_N^{n+j}\right)\partial_x \sum_{i=0}^{q-1}\gamma_i  U_N^{n+i} \rd{x}\right| \\
\le & \kappa\left\|U_N^{n+q}-\sum_{i=1}^{q-1}\eta_i U_N^{n+i}-d_2\sum_{i=0}^{q-1}\gamma_i  U_N^{n+i}\right\|^2 + \frac{\Delta t^2}{4\kappa}\left\|\partial_x \sum_{i=0}^{q-1}\gamma_i  U_N^{n+i}\right\|^2 \\
\le & \kappa\left\|U_N^{n+q}-\sum_{i=1}^{q-1}\eta_i U_N^{n+i}-d_2\sum_{i=0}^{q-1}\gamma_i  U_N^{n+i}\right\|^2 + \frac{\Delta t^2}{4\kappa}C \sum_{i=0}^{q-1} \left\|\partial_x  U_N^{n+i}\right\|^2,
\end{split}\end{equation}
and similarly for the other three integrals of this form in (\ref{GU}) and (\ref{GW}). Then we take $(1-b^2)\times \eqref{GU} + \eqref{GW}$, and by estimating these integrals with $\kappa$ small (in terms of $b$ and $d_1$), we may absorb the terms with coefficient $\kappa$ by the two terms with coefficient $d_1$, and obtain
\begin{equation}\begin{split}
& ((1-b^2)G^{n+1}_U+G^{n+1}_W)-((1-b^2)G^{n}_U+G^{n}_W) \\
\le & C\Delta t^2\sum_{i=0}^{q-1}\left(\left\|\partial_x  U_N^{n+i}\right\|^2+\left\|\partial_x  W_N^{n+i}\right\|^2\right) -\frac{\beta\Delta t}{\eps}\left(A_W^{n+1}-A_W^n + \left\|\sum_{i=1}^q c_i W_N^{n+i}\right\|^2\right),
\end{split}\end{equation}
where we used the fact that 
\begin{equation}
\Re\int \sum_{j=0}^{q-1}\gamma_j \bar{U}_N^{n+j}\partial_x \sum_{i=0}^{q-1}\gamma_i  W_N^{n+i} \rd{x}+\Re\int \sum_{j=0}^{q-1}\gamma_j \bar{W}_N^{n+j}\partial_x \sum_{i=0}^{q-1}\gamma_i  U_N^{n+i} \rd{x}=0.
\end{equation}
Therefore,
\begin{equation}\begin{split}
& ((1-b^2)G^{n+1}_U+G^{n+1}_W+\frac{\beta\Delta t}{\eps}A_W^{n+1})-((1-b^2)G^{n}_U+G^{n}_W+\frac{\beta\Delta t}{\eps}A_W^{n}) \\
\le & C\Delta t^2\sum_{i=0}^{q-1}\left(\left\|\partial_x  U_N^{n+i}\right\|^2 + \left\|\partial_x  W_N^{n+i}\right\|^2 \right).
\end{split}\end{equation}

Using Lemma~\ref{Fourier2} and the CFL condition, one has
\begin{equation}
\|\partial_x U_N^n\| \le N\|U_N^n\| \le \sqrt{\frac{c_{CFL}}{\Delta t}}\|U_N^n\|.
\end{equation}
The same estimate holds for $W_N^n$. Therefore,
\begin{equation} \label{qq}
\begin{split}
& ((1-b^2)G^{n+1}_U+G^{n+1}_W+\frac{\beta\Delta t}{\eps}A_W^{n+1})-((1-b^2)G^{n}_U+G^{n}_W+\frac{\beta\Delta t}{\eps}A_W^{n}) \\
\le & Cc_{CFL}\Delta t\sum_{i=0}^{q-1}\left(\left\| U_N^{n+i}\right\|^2 + \left\| W_N^{n+i}\right\|^2\right) .
\end{split}\end{equation}

On the other hand, the positive-definiteness of $G$ guarantees the existence of $\lambda_{\max}(q)\geq \lambda_{\min}(q)>0$ such that
\begin{equation}\label{lambda}
\lambda_{\min}\sum_{i=0}^{q-1}\left\| U_N^{n+i}\right\|^2 \leq G^n_U \leq \lambda_{\max}\sum_{i=0}^{q-1}\left\| U_N^{n+i}\right\|^2,\quad \lambda_{\min}\sum_{i=0}^{q-1}\left\| W_N^{n+i}\right\|^2 \leq G^n_W \leq \lambda_{\max}\sum_{i=0}^{q-1}\left\| W_N^{n+i}\right\|^2.
\end{equation}
The semi-positive-definiteness of $A$ guarantees the existence of $\eta_{\max}(q)>0$ such that
\begin{equation}
0\leq A_W^n\leq  \eta_{\max} \sum_{i=1}^{q-1}\left\| W_N^{n+i}\right\|^2.
\end{equation}
Define 
\begin{equation}\label{En}
E^n = (1-b^2)G^{n}_U+G^{n}_W+\frac{\beta\Delta t}{\eps}A^{n}_W,
\end{equation}
then
\begin{equation}
(1-b^2)\lambda_{\min}\sum_{i=0}^{q-1}\left(\left\| U_N^{n+i}\right\|^2 + \left\| W_N^{n+i}\right\|^2\right)\leq (1-b^2)G_U^n+G_W^n\leq E^n,
\end{equation}
i.e.,
\begin{equation}
\sum_{i=0}^{q-1}\left(\left\| U_N^{n+i}\right\|^2 + \left\| W_N^{n+i}\right\|^2\right)\leq CE^n.
\end{equation}
Also
\begin{equation}
(1-b^2)G_U^n+G_W^n+\frac{\beta\Delta t}{\eps}A^{n}_W \leq \lambda_{\max}\sum_{i=0}^{q-1}\left(\left\| U_N^{n+i}\right\|^2 + \left\| W_N^{n+i}\right\|^2\right)+\frac{\beta \Delta t}{\eps}\eta_{\max}\sum_{i=1}^{q-1}\left\| W_N^{n+i}\right\|^2,
\end{equation}
i.e.,
\begin{equation}
E^n\leq  C\sum_{i=0}^{q-1}\left(\|U_N^{n+i}\|^2+\Big(1+\frac{\Delta t}{\eps}\Big)\|W_N^{n+i}\|^2\right).
\end{equation}

Therefore, (\ref{qq}) implies
\begin{equation}
E^{n+1} \le (1+C\Delta t)E^n,
\end{equation}
which gives
\begin{equation}
E^n \le (1+C\Delta t)^n E^0 \le \exp(CT)E^0.
\end{equation}
Finally, the conclusion follows by noticing that
\begin{equation}
\|U_N^n\|^2 + \|W_N^n\|^2 \le CE^n, \quad E^0 \le C\sum_{i=0}^{q-1}\left(\|U_N^i\|^2+\Big(1+\frac{\Delta t}{\eps}\Big)\|W_N^i\|^2\right).
\end{equation}
\end{proof}

\begin{remark}
For $q=1,2$, the same estimate \eqref{uniform} holds with the $\frac{\Delta t}{\eps}$ term removed. In view of \eqref{En}, this is a consequence of the fact that for $q=1,2$, $A$ can be taken as zero in Lemma \ref{lem_G}.
\end{remark}

\begin{remark}
The CFL condition $\Delta t\leq c_{CFL}/N^2$ is the standard stability condition one will obtain when using the forward Euler coupled with Fourier spectral method to solve the hyperbolic system \eqref{linearhyp} without relaxation term \cite{HGG07}. 
\end{remark}

\subsection{Uniform accuracy}
\label{subsec:accuracy}

In this subsection, we establish our main result on the uniform accuracy of the IMEX-BDF schemes. To this end, we distinguish two kinds of initial conditions.

{\bf Case 1.} For $q=1,2,3,4$, if the initial data is consistent up to order $q$, in the sense that
\begin{equation}\label{thm_accu_cons}\begin{split}
& \|\partial_t^{q+1}u(0)\|_{H^1} + \|\partial_t^{q+1}w(0)\|_{H^1} \le C, \\
& \|\partial_t^{q}u(0)\|_{H^2} + \|\partial_t^{q}w(0)\|_{H^2} \le C,
\end{split}
\end{equation}
then the IMEX-BDF scheme \eqref{BDF} is applied at any starting time $T_0\ge 0$. 

{\bf Case 2.} For $q=1,2,3,4$, if the initial data satisfies
\begin{equation}\label{thm_accu_0}
\|u(0)\|_{H^{q+2}}^2 + \|w(0)\|_{H^{q+2}}^2 \le C,
\end{equation}
then the IMEX-BDF scheme \eqref{BDF} is applied at the starting time $T_0\ge (q+2)\eps \log(1/\eps)$. 

The above treatment makes sure the initial layer is passed and the initial data is well prepared so that we have the following:

\begin{theorem}[Uniform accuracy of IMEX-BDF schemes]
\label{thm_accu}
Under the conditions of either {\bf Case 1} or {\bf Case 2}, the IMEX-BDF scheme \eqref{BDF} is uniformly $q$-th order accurate under the CFL condition $\Delta t \le \min\{c_{CFL}/N^2,1\}$ for any constant $c_{CFL}>0$, that is,
\begin{equation}
\|u(t_n)-U_N^n\|^2 + \|w(t_n)-W_N^n\|^2 \le C(\Delta t^{2q} + e_\text{proj}),
\end{equation}
for any $n$ such that $t_n=T_0+n\Delta t\leq T$, where $C$ is a constant independent of $\varepsilon$, $N$ and $\Delta t$, and 
\begin{equation}\label{eproj}
e_\text{proj}:=\sum_{i=0}^{q-1}\left(\|(I-\mathcal{P}_N)u(t_i)\|^2 + \Big(1+\frac{\Delta t}{\eps}\Big)\|(I-\mathcal{P}_N)w(t_i)\|^2\right).
\end{equation}
\end{theorem}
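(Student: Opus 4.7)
The plan is to combine the uniform stability of Theorem \ref{thm_stab} with a uniform-in-$\eps$ consistency estimate derived from the regularity analysis of Theorem \ref{thm_reg}.

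First I would introduce the shifted errors $\tilde{e}_U^n := \mathcal{P}_N u(t_n) - U_N^n$ and $\tilde{e}_W^n := \mathcal{P}_N w(t_n) - W_N^n$, which lie in $\mathbb{P}_N$ and, by the assumption on the starting values, vanish for $n=0,\dots,q-1$. Since $\mathcal{P}_N$ commutes with $\partial_x$ and the exact $u,w$ solve \eqref{eq1}, inserting $\mathcal{P}_N u,\mathcal{P}_N w$ into the fully discrete scheme \eqref{BDF} produces local truncation errors $R_U^{n+q},R_W^{n+q}$; subtracting the scheme satisfied by $U_N,W_N$ yields
\begin{equation*}
\begin{split}
& \sum_{i=0}^q \alpha_i \tilde{e}_U^{n+i} + \Delta t \sum_{i=0}^{q-1} \gamma_i \partial_x(b\tilde{e}_U^{n+i} + \tilde{e}_W^{n+i}) = R_U^{n+q}, \\
& \sum_{i=0}^q \alpha_i \tilde{e}_W^{n+i} + \Delta t \sum_{i=0}^{q-1} \gamma_i \partial_x((1-b^2)\tilde{e}_U^{n+i} - b\tilde{e}_W^{n+i}) + \frac{\beta\Delta t}{\eps}\tilde{e}_W^{n+q} = R_W^{n+q}.
\end{split}
\end{equation*}

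The second step establishes $\|R_U^{n+q}\|+\|R_W^{n+q}\| \le C\Delta t^{q+1}$ uniformly in $\eps$. Taylor expanding at $t_{n+q}$ and using the $q$-th order conditions of BDF ($\sum_i\alpha_i p(t_{n+i})=\beta\Delta t\,p'(t_{n+q})+O(\Delta t^{q+1})\partial_t^{q+1}p$) and of the explicit extrapolation ($\sum_i\gamma_i p(t_{n+i})=\beta p(t_{n+q})+O(\Delta t^q)\partial_t^q p$), the leading-order contributions collapse into $\beta\Delta t$ times the exact PDE evaluated at $t_{n+q}$, which vanishes. The delicate piece is $R_W$: its implicit relaxation contribution $\frac{\beta\Delta t}{\eps}w(t_{n+q})$ appears naively as $O(\Delta t/\eps)$, but this $1/\eps$ is precisely what is needed to pair with the BDF approximation of $\partial_t w$ and the extrapolation of $\partial_x((1-b^2)u-bw)$ to reproduce the PDE, so after cancellation only a residual of the form $O(\Delta t^{q+1})$ times $\partial_t^{q+1}w,\partial_t^q\partial_x u,\partial_t^q\partial_x w$ survives. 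Applying Theorem \ref{thm_reg} with $s=q+2$ (justified by the consistency hypotheses of Case 1 at $T_0$, or by the initial-layer waiting time $T_0\ge(q+2)\eps\log(1/\eps)$ in Case 2) bounds all these derivatives uniformly in $\eps$.

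The third step adapts the energy argument of Theorem \ref{thm_stab} to the inhomogeneous error equation. Multiplying the two error equations by $\bar{\tilde{e}}_U^{n+q}-\sum_i\eta_i\bar{\tilde{e}}_U^{n+i}$ and $\bar{\tilde{e}}_W^{n+q}-\sum_i\eta_i\bar{\tilde{e}}_W^{n+i}$, integrating, and invoking Lemma \ref{lem_G} reproduces the left-hand side of the stability proof plus two extra forcing integrals of the form $\int(\bar{\tilde{e}}^{n+q}_\bullet-\sum\eta_i\bar{\tilde{e}}^{n+i}_\bullet)R^{n+q}_\bullet\,\rd{x}$. Writing each multiplier as $\sigma+\tau$ where $\sigma=\tilde{e}^{n+q}-\sum\eta_i\tilde{e}^{n+i}-d_2\sum\gamma_i\tilde{e}^{n+i}$ is the square appearing in \eqref{lem_G_1} and $\tau=d_2\sum\gamma_i\tilde{e}^{n+i}$, one applies Young's inequality with a small constant on the $\sigma$ part (absorbing $\kappa\|\sigma\|^2$ into the $d_1\|\sigma\|^2$ term on the left) and with parameter $\Delta t$ on the $\tau$ part (bounding $\|\tau\|^2\le CE^n$ via the positive-definiteness of $G$). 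This yields
\begin{equation*}
E^{n+1}-E^n \le C\Delta t\,E^n + \frac{C}{\Delta t}\bigl(\|R_U^{n+q}\|^2+\|R_W^{n+q}\|^2\bigr).
\end{equation*}
Discrete Gronwall, $E^0=0$, and Step 2 give $E^n\le C\Delta t^{2q}$, hence $\|\tilde{e}_U^n\|^2+\|\tilde{e}_W^n\|^2\le CE^n\le C\Delta t^{2q}$. The theorem then follows from the triangle inequality $\|u(t_n)-U_N^n\|^2\le 2\|(I-\mathcal{P}_N)u(t_n)\|^2+2\|\tilde{e}_U^n\|^2$ and the spectral projection estimate of Lemma \ref{Fourier1}, which supplies the $e_{\text{proj}}$ contribution (and absorbs any deviation of the starting values from exact $\mathcal{P}_N$-projections).

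The principal obstacle, and the reason this is not a routine extension of BDF analyses for parabolic problems, is the uniform-in-$\eps$ bound on $R_W$: the naive truncation error inherits the stiff factor $1/\eps$ from the relaxation term, and only the exact cancellation with the PDE residual, combined with the uniform-in-$\eps$ regularity of high-order time derivatives supplied by Theorem \ref{thm_reg}, makes $\|R_W\|=O(\Delta t^{q+1})$ independently of $\eps$. A secondary care point is the correct balance in Young's inequality (one part absorbed into the $d_1\|\cdot\|^2$ square, the other into $E^n$ with an explicit $\Delta t$ prefactor) so that the final Gronwall argument produces $\Delta t^{2q}$ and not $\Delta t^{2q-1}$.
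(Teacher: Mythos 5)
Your proposal is correct and follows the same overall strategy as the paper: uniform-in-$\eps$ consistency from the regularity analysis (with the key observation that the implicitly treated relaxation term cancels the $1/\eps$ exactly at $t_{n+q}$, so $\|S_W^n\|=O(\Delta t^{q+1})$ uniformly), followed by the multiplier energy estimate of Theorem \ref{thm_stab} applied to the inhomogeneous error equation, with the source term split by Young's inequality so that Gronwall yields $\Delta t^{2q}$ rather than $\Delta t^{2q-1}$. The one genuine difference is the choice of comparison function: the paper sets $\delta U^n=u(t_n)-U_N^n$, so the projection error of the starting data enters through $E^0\le Ce_{\text{proj}}$ and the final bound is immediate, whereas you compare against $\mathcal{P}_N u(t_n)$, so that $E^0=0$ and the projection error re-enters at the end via the triangle inequality. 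Your variant has the advantage that the error lies in $\mathbb{P}_N$, so the inverse inequality $\|\partial_x\phi\|\le N\|\phi\|$ used in the stability argument applies directly (a point the paper glosses over). The only thing missing to land exactly on the stated estimate is that your triangle inequality produces $\|(I-\mathcal{P}_N)u(t_n)\|^2$ at the \emph{final} time, while $e_{\text{proj}}$ in \eqref{eproj} is evaluated at the starting times $t_0,\dots,t_{q-1}$; this is repaired in one line by noting that $((I-\mathcal{P}_N)u,(I-\mathcal{P}_N)w)$ solves the constant-coefficient system \eqref{eq1} and hence obeys the energy decay \eqref{energyd}, so the projection error at $t_n$ is controlled by $e_{\text{proj}}$. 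A minor presentational slip: in \textbf{Case 1} one does not invoke Theorem \ref{thm_reg} with $s=q+2$; rather, the assumed bounds \eqref{thm_accu_cons} on $\partial_t^{q+1}(u,w)$ and $\partial_t^{q}(u,w)$ propagate forward directly because these derivatives satisfy the same dissipative system.
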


The projection error $e_\text{proj}$ is small if one assumes enough regularity of the initial data. To be precise, we have
\begin{corollary}
Under the conditions of Theorem \ref{thm_accu}, if one further assumes 
\begin{equation}\begin{split}
& \|u(T_0)\|_{H^{2q+1}}^2 + \|w(T_0)\|_{H^{2q+1}}^2 \le C, \\
& \|\partial_tu(T_0)\|_{H^{2q}}^2 + \|\partial_tw(T_0)\|_{H^{2q}}^2 \le C,
\end{split}\end{equation}
then there holds the error estimate
\begin{equation}
\|u(t_n)-U_N^n\|^2 + \|w(t_n)-W_N^n\|^2 \le C\left(\Delta t^{2q} + \frac{1}{N^{4q}}\right),
\end{equation}
for any $n$ such that $t_n=T_0+n\Delta t\leq T$, where $C$ is a constant independent of $\varepsilon$, $N$ and $\Delta t$.
\end{corollary}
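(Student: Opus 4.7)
The plan is to apply Theorem \ref{thm_accu} and reduce the corollary to a bound on the projection error $e_{\text{proj}}$ from \eqref{eproj}. It suffices to prove $e_{\text{proj}} \le CN^{-4q}$, since combining this with Theorem \ref{thm_accu} immediately gives the claim. Everything then amounts to controlling $\|(I-\mathcal{P}_N)u(t_i)\|$ and $\|(I-\mathcal{P}_N)w(t_i)\|$ at the $q$ starting times $t_i = T_0 + i\Delta t$, $i=0,\dots,q-1$, with the right $\eps$-weights.

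First, I propagate the assumed regularity from $T_0$ to each $t_i$. Because \eqref{eq1} is linear with constant coefficients, for any nonnegative integer $r$ the pairs $(\partial_x^r u, \partial_x^r w)$ and $(\partial_t\partial_x^r u, \partial_t\partial_x^r w)$ again solve \eqref{eq1}, so the basic energy inequality \eqref{energyd} applied component-wise gives $\|u(t_i)\|_{H^{2q+1}}^2 + \|w(t_i)\|_{H^{2q+1}}^2 \le C$ and $\|\partial_t u(t_i)\|_{H^{2q}}^2 + \|\partial_t w(t_i)\|_{H^{2q}}^2 \le C$. Next, the second equation of \eqref{eq1} rewritten as $w = -\eps(\partial_t w + \partial_x((1-b^2)u - bw))$ provides the crucial $\eps$-gain: taking the $H^{2q}$ norm of both sides and using the bounds just derived yields
\begin{equation*}
\|w(t_i)\|_{H^{2q}}^2 \le C\eps^2,
\end{equation*}
exactly the factor needed to balance the $\Delta t/\eps$ weight appearing in $e_{\text{proj}}$.

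Finally, I apply Lemma \ref{Fourier1} with $s=2q$. The $u$ contribution to $e_{\text{proj}}$ is bounded by $CN^{-4q}\|u(t_i)\|_{H^{2q}}^2 \le CN^{-4q}$, while the $w$ contribution satisfies
\begin{equation*}
\Big(1+\tfrac{\Delta t}{\eps}\Big)\|(I-\mathcal{P}_N)w(t_i)\|^2 \le CN^{-4q}\Big(1+\tfrac{\Delta t}{\eps}\Big)\eps^2 \le CN^{-4q}(\eps^2 + \Delta t\,\eps) \le CN^{-4q},
\end{equation*}
using $\Delta t \le 1$ and the fact that $\eps$ is bounded above. Summing over $i=0,\dots,q-1$ gives $e_{\text{proj}} \le CN^{-4q}$, and plugging this into Theorem \ref{thm_accu} completes the proof. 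The only nontrivial step is the $\eps^2$ bound on $\|w(t_i)\|_{H^{2q}}^2$; this is precisely why the corollary imposes $H^{2q+1}$ regularity in space together with $H^{2q}$ regularity on the time derivatives at $T_0$, as this is the minimal amount of smoothness that lets one control $w$ via its own defining relation with an explicit $\eps$ prefactor.
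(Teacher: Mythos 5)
Your proof is correct and follows essentially the same route as the paper: reduce to showing $e_\text{proj}\le C N^{-4q}$, propagate the assumed regularity from $T_0$ to the starting times via the energy estimate, and exploit the relation $w=-\eps(\partial_t w+\partial_x((1-b^2)u-bw))$ to gain the factor $\eps^2$ that absorbs the $\Delta t/\eps$ weight. The only (immaterial) difference is ordering: you bound $\|w(t_i)\|_{H^{2q}}\le C\eps$ at the continuous level and then project, whereas the paper first bounds the projection errors of $\partial_t w$, $\partial_x u$, $\partial_x w$ and then applies the same relation to $(I-\mathcal{P}_N)w$.
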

\begin{proof}
By Theorem \ref{thm_accu}, it suffices to prove that $e_\text{proj} \le C/N^{4q}$. In fact, \eqref{energy} implies
\begin{equation}
\|u(t)\|_{H^{2q+1}}^2 + \|w(t)\|_{H^{2q+1}}^2 \le C,
\end{equation}
for all $t\ge T_0$, since $(\partial_x^{s} u,\partial_x^{s} w)$ satisfies the same system \eqref{eq1}. Therefore, using Lemma \ref{Fourier1}, 
\begin{equation}\begin{split}
\|(I-\mathcal{P}_N)u(t)\|^2& \leq \frac{1}{N^{4q+2}} \|u\|_{H^{2q+1}}^2 \leq \frac{C}{N^{4q+2}}.
\end{split}\end{equation}
Similar treatment for $w,\partial_x u, \partial_x w, \partial_t w$ gives
\begin{equation}\begin{split}
& \|(I-\mathcal{P}_N)w(t)\|^2\le \frac{C}{N^{4q+2}},\\
&  \|(I-\mathcal{P}_N)\partial_x u(t)\|^2+\|(I-\mathcal{P}_N)\partial_x w(t)\|^2+\|(I-\mathcal{P}_N)\partial_t w(t)\|^2 \le \frac{C}{N^{4q}}. \\
\end{split}\end{equation}
The second line above implies 
\begin{equation}
\|(I-\mathcal{P}_N)w(t)\|^2\le \eps^2\frac{C}{N^{4q}},
\end{equation}
since $w = -\eps(\partial_t w + \partial_x ((1-b^2)u-bw))$. Then the conclusion follows.
\end{proof}

\begin{proof}[Proof of Theorem \ref{thm_accu}]
We first prove the consistency of the IMEX-BDF schemes and then combine it with the stability to achieve the uniform accuracy.

If \eqref{thm_accu_cons} holds, then \eqref{energy} implies
\begin{equation}\label{energyqt}\begin{split}
& \|\partial_t^{q+1}u(t)\|_{H^1} + \|\partial_t^{q+1}w(t)\|_{H^1} \le C, \\
& \|\partial_t^{q}u(t)\|_{H^2} + \|\partial_t^{q}w(t)\|_{H^2} \le C,
\end{split}
\end{equation}
for all $t\ge 0$, since $(\partial_t^{s_1}\partial_x^{s_2} u,\partial_t^{s_1}\partial_x^{s_2} w)$ satisfies the same system \eqref{eq1}. If \eqref{thm_accu_0} holds, then Theorem \ref{thm_reg} implies \eqref{energyqt} for $t\ge T_0 \ge (q+2)\eps \log(1/\eps)$. Thus in both cases, using the Sobolev inequality,
\begin{equation}\begin{split}
& \|\partial_t^{q+1}u(t)\|_{L^\infty} + \|\partial_t^{q+1}w(t)\|_{L^\infty} \le \|\partial_t^{q+1}u(t)\|_{H^1} + \|\partial_t^{q+1}w(t)\|_{H^1} \le C, \\
& \|\partial_t^{q}\partial_xu(t)\|_{L^\infty} + \|\partial_t^{q}\partial_xw(t)\|_{L^\infty} \le \|\partial_t^{q}\partial_xu(t)\|_{H^1} + \|\partial_t^{q}\partial_xw(t)\|_{H^1} \le C.
\end{split}\end{equation}

For the exact solution $u(t,x)$ there holds the consistency error (pointwise in $x$):
\begin{equation}\begin{split}
&\big| \sum_{i=0}^q\alpha_i u^{n+i} - \beta\Delta t\partial_t u^{n+q} \big|\leq  C \Delta t^{q+1}   \max_{t\in[T_0,T]}  |\partial_t^{q+1} u|, \\
& \big| \Delta t\sum_{i=0}^{q-1}\gamma_i  \partial_xu^{n+i} -\beta \Delta t \partial_x u^{n+q} \big| \leq C \Delta t^{q+1} \max_{t\in[T_0,T]} \big|\partial_t^{q}\partial_x u\big| ,
\end{split}\end{equation}
where $u^n := u(t_n,x)$. Hence, 
\begin{equation}\label{cons}\begin{split}
&\big\| \sum_{i=0}^q\alpha_i u^{n+i} - \beta\Delta t\partial_t u^{n+q} \big\|\leq C \big\| \sum_{i=0}^q\alpha_i u^{n+i} - \beta\Delta t\partial_t u^{n+q} \big\|_{L^{\infty}} \leq C\Delta t^{q+1}, \\
&\big \| \Delta t\sum_{i=0}^{q-1}\gamma_i  \partial_xu^{n+i} -\beta \Delta t \partial_x u^{n+q} \big\| \leq  C\big\| \Delta t\sum_{i=0}^{q-1}\gamma_i  \partial_xu^{n+i} -\beta \Delta t \partial_x u^{n+q}\big \|_{L^{\infty}}\leq C \Delta t^{q+1}.
\end{split}\end{equation}
Similar estimates hold for the exact solution $w(t,x)$.

Let $S_U^n$ and $S_W^n$ be the truncation errors of the scheme, i.e., the remainders obtained by inserting the exact solutions $u(t,x)$ and $w(t,x)$ into (\ref{BDF}):
\begin{equation} \label{BDF1}
\left\{\begin{split}
& \sum_{i=0}^q\alpha_i u^{n+i} + \Delta t \sum_{i=0}^{q-1}\gamma_i \partial_x (b u^{n+i}+w^{n+i}) = S_U^n, \\
& \sum_{i=0}^q\alpha_i w^{n+i} + \Delta t \sum_{i=0}^{q-1}\gamma_i \partial_x ((1-b^2)u^{n+i}-bw^{n+i}) = -\frac{\beta\Delta t}{\eps}w^{n+q}+S_W^n.
\end{split}\right.\end{equation}
Then using the estimates in (\ref{cons}), we have
\begin{equation}
\|S^n_U\| + \|S^n_W\| \leq C \Delta t^{q+1}.
\end{equation}

Define the errors $\delta U^n = u^n-U_N^n$, $\delta W^n = w^n-W_N^n$; then subtracting (\ref{BDF}) from (\ref{BDF1}) gives
\begin{equation}\left\{\begin{split}
& \sum_{i=0}^q\alpha_i \delta U^{n+i} + \Delta t \sum_{i=0}^{q-1}\gamma_i \partial_x (b\delta U^{n+i}+\delta W^{n+i}) = S^n_U ,\\
& \sum_{i=0}^q\alpha_i \delta W^{n+i} + \Delta t \sum_{i=0}^{q-1}\gamma_i \partial_x ((1-b^2)\delta U^{n+i}-b\delta W^{n+i}) = -\frac{\beta\Delta t}{\eps}\delta W^{n+q} +S^n_W.
\end{split}\right.\end{equation}

Therefore, $(\delta U,\delta W)$ satisfies the same scheme \eqref{BDF} up to the source terms. An argument similar to the proof of Theorem \ref{thm_stab} gives
\begin{equation}\label{est_delta}\begin{split}
& \left((1-b^2)G^{n+1}_{\delta U}+G^{n+1}_{\delta W}+\frac{\beta\Delta t}{\eps}A^{n+1}_{\delta W}\right)-\left((1-b^2)G^{n}_{\delta U}+G^{n}_{\delta W}+\frac{\beta\Delta t}{\eps}A^{n}_{\delta W}\right) \\
\le & C\Delta t\sum_{i=0}^{q-1}\left(\left\| \delta U^{n+i}\right\|^2 + \left\| \delta W^{n+i}\right\|^2\right) + \kappa \Delta t \left(\|\delta U^{n+q}\|^2+\|\delta W^{n+q}\|^2\right) + \frac{C}{\kappa }\Delta t^{2q+1},
\end{split}\end{equation}
where the term involving $S^n_U$ is estimated by ($S^n_W$ is estimated similarly)
\begin{equation}
 \left|\Re\int \left(\delta \bar{U}^{n+q}-\sum_{i=1}^{q-1}\eta_i \delta\bar{U}^{n+i}\right)S_U^n \rd{x}\right|
\le \kappa \Delta t \|\delta U^{n+q}\|^2 + C\Delta t\sum_{i=1}^{q-1}\|\delta U^{n+i}\|^2 + \frac{C}{\kappa }\Delta t^{2q+1},
\end{equation}
with $\kappa >0$ small, to be chosen. 

Similarly as in Theorem \ref{thm_stab}, define
\begin{equation}
E^n=(1-b^2)G^{n}_{\delta U}+G^{n}_{\delta W}+\frac{\beta\Delta t}{\eps}A^{n}_{\delta W};
\end{equation}
then by the postive-definiteness of $G$ and semi-positive-definiteness of $A$, we have
\begin{equation}\begin{split}
\sum_{i=0}^{q-1}\left(\left\| \delta U^{n+i}\right\|^2 + \left\| \delta W^{n+i}\right\|^2\right) \le \frac{1}{(1-b^2)\lambda_{\min}} E^n,
\end{split}\end{equation}
and
\begin{equation}\begin{split}
 \|\delta U^{n+q}\|^2+\|\delta W^{n+q}\|^2 \le \frac{1}{(1-b^2)\lambda_{\min}} E^{n+1}.
\end{split}\end{equation}
Using these in \eqref{est_delta} gives
\begin{equation}\begin{split}
\left(1-\frac{\kappa\Delta t }{(1-b^2)\lambda_{\min}} \right) E^{n+1}\le (1+C\Delta t)E^n+ \frac{C}{\kappa}\Delta t^{2q+1}.
\end{split}\end{equation}
Recall that $\Delta t\le 1$ by assumption. By choosing $\kappa  = \frac{(1-b^2)\lambda_{\min}}{2}$, we have
\begin{equation}
\frac{1}{1-\frac{\kappa \Delta t}{(1-b^2)\lambda_{\min}}} = \frac{1}{1-\frac{\Delta t}{2}} \le 1+ \Delta t.
\end{equation}
Hence
\begin{equation}
E^{n+1}\le (1+C\Delta t)E^n+ C\Delta t^{2q+1}.
\end{equation}
Then using Gronwall's inequality, we obtain
\begin{equation}
E^n\leq C (E_0+\Delta t^{2q}).
\end{equation}
Finally the conclusion follows by noticing that 
\begin{equation}
\|\delta U^n\|^2+\|\delta W^n\|^2\leq CE^n, \quad E^0\leq  C\sum_{i=0}^{q-1}\left(\|\delta U_N^{i}\|^2+\Big(1+\frac{\Delta t}{\eps}\Big)\|\delta W_N^{i}\|^2\right)=C e_\text{proj}.
\end{equation}
\end{proof}

%%%%%%%%%%%%%%%%%%%%%%%%%%%%%%%%%%%%%%%
\section{Extension to the variable coefficient case}
\label{sec:variable}
%%%%%%%%%%%%%%%%%%%%%%%%%%%%%%%%%%%%%%%

In this section we show that our results on the uniform accuracy of IMEX-BDF schemes can be extended to linear hyperbolic relaxation systems with variable coefficients:
\begin{equation}\label{eq2_0}\left\{\begin{split}
& \partial_t u + \partial_x v = 0, \\
& \partial_t v + \partial_x u = \frac{\sigma(x)}{\eps}(b(x)u-v),
\end{split}\right.\end{equation}
where $b(x)$ and $\sigma(x)$ are smooth, satisfying
\begin{equation}
|b(x)|\le b_1<1,\quad  0<\sigma_0\le \sigma(x) \le \sigma_1, \quad x\in[0,2\pi].
\end{equation}
In particular, the rationale of considering $\sigma(x)$ is that it resembles the collision frequency in the kinetic equation.

For simplicity, we will only show the analogs of Theorem \ref{thm_reg} with $s=1$ and Theorem \ref{thm_stab} with $q=1$. Higher order case can be obtained in similar ways.

Introducing the new variable $w = v-b(x)u$, one can rewrite \eqref{eq2_0} as
\begin{equation}\label{eq2}\left\{\begin{split}
& \partial_t u + \partial_x (b(x)u+w) = 0, \\
& \partial_t w + \partial_x ((1-b(x)^2)u) + b(x)\partial_x b(x) u - b (x)\partial_x w= -\frac{\sigma(x)}{\eps}w.
\end{split}\right.\end{equation}
Multiplying the first equation by $(1-b^2)u$ and integrating in $x$, we get
\begin{equation}
\partial_t \frac{1}{2}\|u\|_b^2 + \int (1-b^2)u\partial_x w \rd{x} + \int \frac{1}{2}(1+b^2)(\partial_xb) u^2\rd{x} = 0 ,\quad \|u\|_b^2 = \int (1-b^2)u^2\rd{x},
\end{equation}
using the notation $\|u\|_b$ for a weighted $L^2$ norm, being equivalent to $\|u\|$ since $|b(x)|\le b_1<1$. Multiplying the second equation of \eqref{eq2} by $w$ and integrating in $x$, we get
\begin{equation}
\partial_t \frac{1}{2}\|w\|^2 + \int w\partial_x ((1-b^2)u) \rd{x} + \int (b\partial_xb) u w \rd{x} + \frac{1}{2}\int (\partial_xb) w^2\rd{x} = -\frac{1}{\eps}\int \sigma w^2\rd{x}.
\end{equation}
Adding the above two relations gives the energy estimate
\begin{equation}\label{energyb}
\partial_t \frac{1}{2}(\|u\|_b^2+\|w\|^2) \le -\frac{\sigma_0}{\eps}\|w\|^2 + C(\|u\|^2+\|w\|^2),
\end{equation}
since $b(x)$ is a smooth function.

We first state the regularity result, analogous to Theorem \ref{thm_reg} with $s=1$:
\begin{theorem}\label{thm_regb}
Assume 
\begin{equation}\label{thm_regb_0}
\|u(0)\|_{H^1}^2 + \|w(0)\|_{H^1}^2 =: E_0 < \infty.
\end{equation}
Then  the solution to \eqref{eq2} satisfies
\begin{equation}\label{thm_regb_1}
\|\partial_t^{r_1}\partial_x^{r_2} u(t)\|^2 + \|\partial_t^{r_1}\partial_x^{r_2} w(t)\|^2 \le CE_0e^{Ct},\quad r_1+r_2 \le 1,
\end{equation}
and
\begin{equation}\label{thm_regb_2}
\|w(t)\|^2 \le CE_0\eps^2e^{Ct},
\end{equation}
for all $t\ge \frac{\eps \log(1/\eps)}{\sigma_0}$. 
\end{theorem}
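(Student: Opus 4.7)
My plan is to follow the strategy of Theorem~\ref{thm_reg}, keeping in mind that the variable coefficients force exponential-in-time growth factors $e^{Ct}$ in place of the pure dissipation available in the constant-coefficient setting. The argument divides naturally into three parts: an $H^1$ bound on $(u,w)$ obtained by a weighted energy estimate, an algebraic bound on $\partial_t u$ extracted from the first equation of \eqref{eq2}, and an initial-layer argument producing a uniform-in-$\eps$ bound on $\partial_t w$. Once these are in place, the estimate \eqref{thm_regb_2} on $\|w\|$ follows by solving the second equation of \eqref{eq2} for $w$ in terms of quantities already controlled.

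For the first part, applying Gronwall to \eqref{energyb} immediately gives $\|u(t)\|^2+\|w(t)\|^2 \le CE_0 e^{Ct}$. Differentiating \eqref{eq2} in $x$ shows that $(\partial_x u,\partial_x w)$ satisfies the same system up to source terms that are linear in $u,w,\partial_x u,\partial_x w$ with coefficients built from $b$, $\sigma$ and their first two derivatives; crucially, the principal dissipative term $-\tfrac{\sigma}{\eps}(\partial_x w)^2$ is retained. Repeating the weighted-$L^2$ energy estimate (with weight $1-b^2$) and a second Gronwall application then yield $\|\partial_x u(t)\|^2+\|\partial_x w(t)\|^2\le CE_0 e^{Ct}$. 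The bound on $\partial_t u$ follows at once from $\partial_t u=-\partial_x(bu+w)$ and the spatial estimate just obtained.

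The main obstacle is the bound on $\partial_t w$, since the second equation of \eqref{eq2} only provides $\|\partial_t w(0)\|^2\le CE_0/\eps^2$ directly. To remove the factor $1/\eps^2$ I would differentiate the system in $t$; since $b,\sigma$ are $t$-independent, $(\partial_t u,\partial_t w)$ satisfies \emph{exactly} the original system \eqref{eq2} and therefore inherits the same energy inequality
\begin{equation*}
\partial_t\tfrac{1}{2}\bigl(\|\partial_t u\|_b^2+\|\partial_t w\|^2\bigr) \le -\tfrac{\sigma_0}{\eps}\|\partial_t w\|^2 + C\bigl(\|\partial_t u\|^2+\|\partial_t w\|^2\bigr).
\end{equation*}
Using the bound on $\|\partial_t u\|$ obtained above, this reduces to a scalar Gronwall inequality of the form $\partial_t\|\partial_t w\|^2 \le (C-\tfrac{2\sigma_0}{\eps})\|\partial_t w\|^2 + CE_0 e^{Ct}$. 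Explicit integration shows that at the time $t_1=\eps\log(1/\eps)/\sigma_0$ the smoothing factor $e^{-2\sigma_0 t_1/\eps}=\eps^2$ precisely cancels the $1/\eps^2$ in the initial data, giving $\|\partial_t w(t)\|^2\le CE_0 e^{Ct}$ for $t\ge t_1$ and completing \eqref{thm_regb_1}. Finally, solving the second equation of \eqref{eq2} algebraically, $w=-\tfrac{\eps}{\sigma}\bigl(\partial_t w+\partial_x((1-b^2)u)+b(\partial_x b)u-b\partial_x w\bigr)$, and inserting all the estimates already established yields \eqref{thm_regb_2}. The subtlety threaded through every step is that the commutators $[\partial_x,b(x)]$ and $[\partial_x,\sigma(x)]$ must produce only lower-order contributions that can be absorbed into the $C(\|u\|^2+\|w\|^2)$-type remainder and closed by Gronwall, without ever spoiling the leading dissipation $-\tfrac{\sigma}{\eps}\|w\|^2$.
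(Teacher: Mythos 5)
Your overall architecture is exactly the paper's: weighted $L^2$ energy, then the $\partial_x$-differentiated system, then $\partial_t u$ algebraically from the first equation, then the initial-layer/Duhamel argument for $(\partial_t u,\partial_t w)$ over $t_1=\eps\log(1/\eps)/\sigma_0$, and finally $w=-\tfrac{\eps}{\sigma}(\cdots)$ for \eqref{thm_regb_2}. Those parts are sound. However, there is one concrete gap in your $H^1$ step. When you differentiate the second equation of \eqref{eq2} in $x$, the relaxation term produces the source $-\tfrac{\partial_x\sigma}{\eps}\,w$, so after pairing with $\partial_x w$ you are left with a contribution bounded only by
\begin{equation*}
\left|\int \partial_x w\left(-\frac{\partial_x\sigma}{\eps}w\right)\rd{x}\right|\le \frac{\sigma_0}{2\eps}\|\partial_x w\|^2+\frac{C}{\eps}\|w\|^2 .
\end{equation*}
The first piece is absorbed by half the dissipation $-\tfrac{\sigma_0}{\eps}\|\partial_x w\|^2$, but the remainder $\tfrac{C}{\eps}\|w\|^2$ is \emph{not} a ``lower-order contribution that can be absorbed into the $C(\|u\|^2+\|w\|^2)$-type remainder'': it carries a factor $1/\eps$, so a plain Gronwall at the $H^1$ level would give a constant blowing up as $\eps\to 0$. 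Your closing sentence asserts precisely the property that fails here.

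The missing idea is to couple the two levels of the energy: the $L^2$ inequality \eqref{energyb} carries its own dissipation $-\tfrac{\sigma_0}{\eps}\|w\|^2$, and adding $C_1/\sigma_0$ times \eqref{energyb} to the $H^1$ inequality cancels the offending $\tfrac{C_1}{\eps}\|w\|^2$ term, after which Gronwall closes with an $\eps$-independent constant. (Equivalently, one may integrate \eqref{energyb} in time to get $\tfrac{1}{\eps}\int_0^t\|w\|^2\rd{\tau}\le CE_0e^{Ct}$ and use that to control the time integral of the bad term.) With this one repair your proof coincides with the paper's; everything downstream of the $H^1$ bound is as you describe.
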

\begin{proof}
$(\partial_x u, \partial_x w)$ satisfies
\begin{equation}\left\{\begin{split}
& \partial_t \partial_xu + \partial_x (b\partial_xu+\partial_xw) = -\partial_x(\partial_xb u), \\
& \partial_t \partial_xw + \partial_x ((1-b^2)\partial_xu) + b\partial_xb \partial_xu - b\partial_{xx}w = -\frac{\sigma}{\eps}\partial_xw +  2\partial_x(b\partial_xb u) - \partial_x(b\partial_xb)u +\partial_xb\partial_xw -\frac{\partial_x\sigma}{\eps}w.
\end{split}\right.\end{equation}
Therefore, similar energy estimate as above gives
\begin{equation}\label{est_b}
\partial_t \frac{1}{2}\left(\|\partial_xu\|_b^2+\|\partial_xw\|^2\right) \le -\frac{\sigma_0}{2\eps}\|\partial_xw\|^2 +C(\|\partial_xu\|^2+\|\partial_xw\|^2 + \|u\|^2+\|w\|^2) + \frac{C_1}{\eps} \|w\|^2,
\end{equation}
where the term $\int \partial_x w (-\frac{\partial_x\sigma}{\eps}w)\rd{x}$ is handled by 
\begin{equation}
\left|\int \partial_x w \left (-\frac{\partial_x\sigma}{\eps}w\right)\rd{x}\right| \le \frac{\|\partial_x\sigma\|_{L^\infty}}{\eps}\int |w\partial_xw|\rd{x} \le \frac{\sigma_0}{2\eps}\|\partial_xw\|^2 + \frac{\|\partial_x\sigma\|_{L^\infty}^2}{2\sigma_0 }\frac{1}{\eps}\|w\|^2.
\end{equation}
Multiplying \eqref{energyb} by $C_1/\sigma_0$ and adding to \eqref{est_b}, we can absorb the last term $\frac{C_1}{\eps} \|w\|^2$ and get
\begin{equation}\begin{split}
& \partial_t \left(\frac{1}{2}(\|\partial_xu\|_b^2+\|\partial_xw\|^2) + \frac{C_1}{2\sigma_0}(\|u\|_b^2+\|w\|^2)\right) \\
 \le & -\frac{\sigma_0}{2\eps}\|\partial_xw\|^2 +C\left(\|\partial_xu\|^2+\|\partial_xw\|^2 + \|u\|^2+\|w\|^2\right) ,
\end{split}\end{equation}
and then it follows that
\begin{equation}\label{energyb2}
\|\partial_x u(t)\|^2 + \|\partial_x w(t)\|^2 \le CE_0e^{Ct},
\end{equation}
for all $t\ge 0$. 

$(\partial_t u,\partial_t w)$ satisfies the same system \eqref{eq2}, and thus has the energy estimate
\begin{equation} \label{energyb3}
\begin{split}
\partial_t \frac{1}{2}\left(\|\partial_tu\|_b^2+\|\partial_tw\|^2\right) \le & -\frac{\sigma_0}{\eps}\|\partial_tw\|^2 + C\left(\|\partial_tu\|^2+\|\partial_tw\|^2\right) \\
= & -\frac{\sigma_0}{\eps}\left(\|\partial_tu\|_b^2+\|\partial_tw\|^2\right) + C\left(\|\partial_tu\|^2+\|\partial_tw\|^2\right) + \frac{\sigma_0}{\eps}\|\partial_tu\|_b^2 \\
\le & -\Big(\frac{\sigma_0}{\eps}-C_1\Big)\left(\|\partial_tu\|_b^2+\|\partial_tw\|^2\right)  + \frac{\sigma_0}{\eps}\|\partial_tu\|_b^2,
\end{split}\end{equation}
for some $C_1>0$. Notice that $\partial_t u = -\partial_x (b(x)u+w)$. Thus by \eqref{energyb2},
\begin{equation}
\|\partial_t u(t)\|^2 \le CE_0e^{Ct}.
\end{equation}
Similarly
\begin{equation}
\|\partial_t w(t)\|^2 \le \frac{1}{\eps^2}CE_0e^{Ct}.
\end{equation}
Then by \eqref{energyb3}, 
\begin{equation}\begin{split}
& \frac{1}{2}\left(\|\partial_t  u(t)\|_b^2+\|\partial_t  w(t)\|^2\right)\\
\le & e^{-2(\frac{\sigma_0}{\eps}-C_1) t}\frac{1}{2}\left(\|\partial_t  u(0)\|_b^2+\|\partial_t  w(0)\|^2\right)  + \int_0^t e^{-2(\frac{\sigma_0}{\eps}-C_1)(t-\tau)}\frac{\sigma_0}{\eps}\|\partial_\tau u(\tau)\|_b^2\rd{\tau} \\
\le & e^{2C_1t}\left(e^{-2\sigma_0t/\eps}\frac{1}{2}\left(\|\partial_t  u(0)\|_b^2+\|\partial_t  w(0)\|^2\right)  + \int_0^t e^{-2\sigma_0(t-\tau)/\eps}\frac{\sigma_0}{\eps}\|\partial_\tau u(\tau)\|_b^2\rd{\tau}\right) \\
\le & \left(e^{-2\sigma_0t/\eps}\frac{1}{\eps^2} +1\right)CE_0e^{(C+2C_1)t}, \\
\end{split}\end{equation}
and it follows that 
\begin{equation}
\|\partial_t w(t)\|^2 \le CE_0e^{Ct},
\end{equation}
for $t\ge \frac{\eps \log(1/\eps)}{\sigma_0}$. \eqref{thm_regb_2} follows from \eqref{thm_regb_1} using the relation $w = -\frac{\eps}{\sigma}(\partial_t w + \partial_x ((1-b^2)u) + b\partial_x b u - b \partial_x w)$.
\end{proof}

To apply the spectral discretization in space, we first need to rewrite \eqref{eq2} using the variables $(\tu,w)$, $\tu = u\sqrt{1-b^2}$:\footnote{The purpose of this reformulation is to make the Lyapunov functional $\|u\|^2_b+\|w\|^2$ into the pure $L^2$ norm $\|\tu\|^2+\|w\|^2$, so that no further error is produced by the Galerkin projection when conducting energy estimates.}
\begin{equation}\label{eq2_1}\left\{\begin{split}
& \partial_t \tu  + \partial_x (b\tu) + \frac{b^2\partial_x b}{1-b^2}\tu + \sqrt{1-b^2}\partial_x w = 0, \\
& \partial_t w + \partial_x (\sqrt{1-b^2}\tu) + \frac{b\partial_x b}{\sqrt{1-b^2}} \tu - b\partial_x w = -\frac{\sigma}{\eps}w.
\end{split}\right.\end{equation}
Based on this reformulation, the first order IMEX-BDF (i.e., forward-backward Euler) scheme together with the Fourier-Galerkin spectral method reads
\begin{equation}\label{BDFb}\left\{\begin{split}
& \tU_N^{n+1}-\tU_N^n +  \Delta t \mathcal{P}_N\left( \partial_x (b\tU_N^n) + \frac{b^2\partial_x b}{1-b^2}\tU_N^n + \sqrt{1-b^2}\partial_x W_N^n \right) = 0, \\
& W_N^{n+1}-W_N^n + \Delta t \mathcal{P}_N \left[ \partial_x \left(\sqrt{1-b^2}\tU_N^{n}\right) + \frac{b\partial_x b}{\sqrt{1-b^2}} \tU_N^n - b\partial_xW_N^n \right]= -\frac{\Delta t}{\eps}\mathcal{P}_N(\sigma W_N^{n+1}).
\end{split}\right.\end{equation}
We have the following stability result:

\begin{theorem}\label{thm_stabb}
Under the CFL condition $\Delta t \le \min\{c_{CFL}/N^2,1\}$ for any $c_{CFL}>0$, \eqref{BDFb} is uniformly stable, in the sense that
\begin{equation}
\|\tU_N^n\|^2 + \|W_N^n\|^2 \le C\left(\|\tU_N^0\|^2+\|W_N^0\|^2\right),
\end{equation}
for any $n$ such that $t_n=T_0+n\Delta t\leq T$, where $C$ is a constant independent of $\eps$, $\Delta t$ and $N$.
\end{theorem}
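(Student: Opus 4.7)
The plan is to mimic the continuous energy estimate \eqref{energyb} at the fully discrete level, using the first-order BDF identity and exploiting the self-adjointness of $\mathcal{P}_N$. Taking the $L^2$ inner product of the first equation of \eqref{BDFb} with $\tU_N^{n+1}\in\mathbb{P}_N$ and of the second with $W_N^{n+1}\in\mathbb{P}_N$, Lemma \ref{adjoint} kills every occurrence of $\mathcal{P}_N$. The BDF1 identity
\[
\langle a^{n+1},a^{n+1}-a^n\rangle=\tfrac12(\|a^{n+1}\|^2-\|a^n\|^2)+\tfrac12\|a^{n+1}-a^n\|^2
\]
then gives me one half of a telescoping energy plus a positive numerical dissipation $\tfrac12\|\tU_N^{n+1}-\tU_N^n\|^2+\tfrac12\|W_N^{n+1}-W_N^n\|^2$ that will be my main absorption reservoir. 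The implicit relaxation term yields $-\tfrac{\Delta t}{\eps}\int\sigma|W_N^{n+1}|^2\le 0$, which I simply drop.

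The heart of the argument is the treatment of the cross convection terms $\Delta t\langle\tU_N^{n+1},\sqrt{1-b^2}\,\partial_x W_N^n\rangle+\Delta t\langle W_N^{n+1},\partial_x(\sqrt{1-b^2}\,\tU_N^n)\rangle$, since these have no sign by themselves. Writing $\tU_N^{n+1}=\tU_N^n+(\tU_N^{n+1}-\tU_N^n)$ and analogously for $W$, the "$n$-level" part integrates by parts to exactly zero (this is precisely the cancellation that makes $\|\tu\|^2+\|w\|^2$ a Lyapunov functional in \eqref{eq2_1}), while the remainder is of the form $\Delta t\langle \tU_N^{n+1}-\tU_N^n,\sqrt{1-b^2}\,\partial_x W_N^n\rangle+\Delta t\langle W_N^{n+1}-W_N^n,\partial_x(\sqrt{1-b^2}\,\tU_N^n)\rangle$. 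Young's inequality followed by the inverse estimate (Lemma \ref{Fourier2}) and the CFL condition $\Delta t\le c_{CFL}/N^2$ yields a bound
\[
\kappa\|\tU_N^{n+1}-\tU_N^n\|^2+\kappa\|W_N^{n+1}-W_N^n\|^2+\tfrac{Cc_{CFL}\Delta t}{\kappa}(\|\tU_N^n\|^2+\|W_N^n\|^2),
\]
and for $\kappa$ small the first two pieces are absorbed by the BDF1 numerical dissipation. The same shift-and-split trick handles the "self" convection pieces $\Delta t\langle\tU_N^{n+1},\partial_x(b\tU_N^n)\rangle$ and $-\Delta t\langle W_N^{n+1},b\partial_x W_N^n\rangle$: the $n$-level parts reduce via integration by parts to $O(1)\int(\partial_xb)|\tU_N^n|^2$ and $O(1)\int(\partial_xb)|W_N^n|^2$, while the remainders are again absorbed through Young + inverse + CFL.

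The remaining zeroth-order contributions from the variable-coefficient terms $\tfrac{b^2\partial_xb}{1-b^2}\tU_N^n$ and $\tfrac{b\partial_xb}{\sqrt{1-b^2}}\tU_N^n$ are directly bounded by $C\Delta t(\|\tU_N^{n+1}\|^2+\|\tU_N^n\|^2+\|W_N^{n+1}\|^2)$ using the smoothness and uniform bounds on $b$. Collecting everything yields a discrete Gronwall-type inequality
\[
\|\tU_N^{n+1}\|^2+\|W_N^{n+1}\|^2 \le (1+C\Delta t)\bigl(\|\tU_N^n\|^2+\|W_N^n\|^2\bigr),
\]
where the constant $C$ depends on $b_1$, $\|b\|_{C^1}$, $c_{CFL}$ but is independent of $\eps$, $N$ and $\Delta t$ (a small factor $(1-C\Delta t)^{-1}$ is absorbed using $\Delta t\le 1$); iterating over $n\le T/\Delta t$ steps gives the claimed uniform bound.

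The main obstacle is the careful bookkeeping of the multiple $\kappa\|\tU_N^{n+1}-\tU_N^n\|^2$ and $\kappa\|W_N^{n+1}-W_N^n\|^2$ contributions from each of the four non-dissipative derivative integrals (two cross, two self), and ensuring that a single small $\kappa$ dominates them all while staying strictly below $1/2$ so that the BDF1 dissipation is not exhausted; since the $\eps$-dependent term $-\tfrac{\Delta t}{\eps}\int\sigma|W_N^{n+1}|^2$ has the favorable sign it is never used, which is precisely why the final constant is $\eps$-independent.
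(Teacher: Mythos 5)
Your proposal is correct and follows essentially the same route as the paper's proof: multiply by the $(n+1)$-level iterates, use the self-adjointness of $\mathcal{P}_N$, invoke the BDF1 identity to extract the numerical dissipation $\tfrac12\|\cdot^{n+1}-\cdot^n\|^2$, split each convection integral into an $n$-level part (which cancels or reduces to $\int(\partial_x b)|\cdot|^2$ after integration by parts) plus a difference part absorbed via Young, the inverse estimate and the CFL condition, drop the nonnegative relaxation term, and close with a discrete Gronwall argument. The only cosmetic difference is that the paper fixes the absorption constants as $\tfrac14$ per term rather than tracking a single small $\kappa$; the substance is identical.
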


\begin{proof}
Multiplying the first equation of \eqref{BDFb} by $\bar{\tU}_N^{n+1}$, integrating in $x$ and taking the real part gives
\begin{equation}\label{GUb}\begin{split}
0 = & \frac{1}{2}\|\tU_N^{n+1}\|^2 - \frac{1}{2}\|\tU_N^{n}\|^2 + \frac{1}{2}\|\tU_N^{n+1}-\tU_N^n\|^2 + \Delta t \Re\int \bar{\tU}_N^{n+1}\partial_x (b\tU_N^n) \rd{x} \\
& + \Delta t \Re\int \bar{\tU}_N^{n+1}\frac{b^2\partial_x b}{1-b^2}\tU_N^n \rd{x} + \Delta t \Re\int \bar{\tU}_N^{n+1}\sqrt{1-b^2}\partial_x W_N^n  \rd{x} \\
= & \frac{1}{2}\|\tU_N^{n+1}\|^2 - \frac{1}{2}\|\tU_N^{n}\|^2 + \frac{1}{2}\|\tU_N^{n+1}-\tU_N^n\|^2 + \Delta t \Re\int \frac{b^2\partial_x b}{1-b^2}\bar{\tU}_N^{n+1}\tU_N^n \rd{x}\\
& + \Delta t \Re\int (\bar{\tU}_N^{n+1}-\bar{\tU}_N^n)\partial_x (b\tU_N^n) \rd{x}+ \Delta t \Re\int \bar{\tU}_N^n\partial_x (b\tU_N^n) \rd{x} \\
&  + \Delta t \Re\int (\bar{\tU}_N^{n+1}-\bar{\tU}_N^n)\sqrt{1-b^2}\partial_x W_N^n  \rd{x} + \Delta t \Re\int \bar{\tU}_N^{n}\sqrt{1-b^2}\partial_x W_N^n  \rd{x}, \\
\end{split}\end{equation}
where we used the fact that $\mathcal{P}_N$ is self-adjoint (Lemma \ref{adjoint}) and $\mathcal{P}_N(\tU_N^{n+1})=\tU_N^{n+1}$.

Multiplying the second equation of \eqref{BDFb} by $\bar{W}_N^{n+1}$, integrating in $x$ and taking the real part gives
\begin{equation}\label{GWb}\begin{split}
& -\frac{\Delta t}{\eps}\int \sigma |W_N^{n+1}|^2 \rd{x} \\
 = & \frac{1}{2}\|W_N^{n+1}\|^2 - \frac{1}{2}\|W_N^{n}\|^2 + \frac{1}{2}\|W_N^{n+1}-W_N^n\|^2 \\
 & + \Delta t \Re\int \bar{W}_N^{n+1}\partial_x(\sqrt{1-b^2}\tU_N^{n})  \rd{x} + \Delta t\Re\int \bar{W}_N^{n+1}\frac{b\partial_x b}{\sqrt{1-b^2}} \tU_N^n\rd{x} - \Delta t \Re\int \bar{W}_N^{n+1}b\partial_x(W_N^n)  \rd{x} \\
= & \frac{1}{2}\|W_N^{n+1}\|^2 - \frac{1}{2}\|W_N^{n}\|^2 + \frac{1}{2}\|W_N^{n+1}-W_N^n\|^2+ \Delta t\Re\int \frac{b\partial_x b}{\sqrt{1-b^2}} \bar{W}_N^{n+1}\tU_N^n\rd{x}  \\
& - \Delta t \Re\int (\bar{W}_N^{n+1}-\bar{W}_N^n)b\partial_x(W_N^n)  \rd{x} - \Delta t \Re\int \bar{W}_N^{n}b\partial_x(W_N^n)  \rd{x} \\
& + \Delta t \Re\int (\bar{W}_N^{n+1}-\bar{W}_N^n)\partial_x(\sqrt{1-b^2}\tU_N^{n})  \rd{x}+ \Delta t \Re\int \bar{W}_N^{n}\partial_x(\sqrt{1-b^2}\tU_N^{n})  \rd{x}.\\
\end{split}\end{equation}

We notice
\begin{equation}
\Re\int \bar{\tU}_N^{n}\sqrt{1-b^2}\partial_x W_N^n  \rd{x}  + \Re\int \bar{W}_N^{n}\partial_x(\sqrt{1-b^2}\tU_N^{n})  \rd{x} = 0,
\end{equation}
and estimate the terms
\begin{equation}\begin{split}
& \Delta t \left|\Re\int \frac{b^2\partial_x b}{1-b^2}\bar{\tU}_N^{n+1}\tU_N^n \rd{x}\right| \le \kappa \Delta t\|\tU_N^{n+1}\|^2+ \frac{C}{\kappa }\Delta t\|\tU_N^n\|^2, \\
& \Delta t \left|\Re\int (\bar{\tU}_N^{n+1}-\bar{\tU}_N^n)\partial_x (b\tU_N^n) \rd{x}\right| \le \frac{1}{4}\|\tU_N^{n+1}-\tU_N^n\|^2 + C\Delta t^2\left(\|\tU_N^n\|^2+\|\partial_x\tU_N^n\|^2\right), \\
& \Delta t \left|\Re\int (\bar{\tU}_N^{n+1}-\bar{\tU}_N^n)\sqrt{1-b^2}\partial_x W_N^n\rd{x}\right| \le \frac{1}{4}\|\tU_N^{n+1}-\tU_N^n\|^2 + C\Delta t^2\|\partial_xW_N^n\|^2, \\
& \Delta t \left|\Re\int \bar{\tU}_N^n\partial_x (b\tU_N^n) \rd{x}\right| = \Delta t \left|\Re\int b\partial_x\bar{\tU}_N^n\tU_N^n \rd{x}\right| = \Delta t \left|\int b\frac{1}{2}\partial_x|\tU_N^n|^2 \rd{x}\right| \\
& = \Delta t \left|\int \frac{1}{2}(\partial_x b)|\tU_N^n|^2 \rd{x}\right| \le C\Delta t\|\tU_N^n\|^2,
\end{split}\end{equation}
with $\kappa >0$ small, to be chosen, and similar for those terms in (\ref{GWb}). Therefore, adding \eqref{GUb} and \eqref{GWb} yields
\begin{equation}\begin{split}
& \frac{1}{2}\left(\|\tU_N^{n+1}\|^2+\|W_N^{n+1}\|^2\right) - \frac{1}{2}\left(\|\tU_N^{n}\|^2+\|W_N^{n}\|^2\right) \\
\le & \kappa \Delta t \left(\|\tU_N^{n+1}\|^2+\|W_N^{n+1}\|^2\right) + C\left(1+\frac{1}{\kappa }\right)\Delta t\left(\|\tU_N^n\|^2+\|W_N^n\|^2\right) + C\Delta t^2\left(\|\partial_x\tU_N^n\|^2+\|\partial_xW_N^n\|^2\right).
\end{split}\end{equation}

Under the CFL condition, one has (similarly for $W_N^n$)
\begin{equation}
\|\partial_x \tU_N^n\| \le N\|\tU_N^n\| \le \sqrt{\frac{c_{CFL}}{\Delta t}}\|U_N^n\|.
\end{equation}
Therefore
\begin{equation}\begin{split}
& \frac{1}{2}\left(\|\tU_N^{n+1}\|^2+\|W_N^{n+1}\|^2\right) - \frac{1}{2}\left(\|\tU_N^{n}\|^2+\|W_N^{n}\|^2\right) \\
\le & \kappa \Delta t \left(\|\tU_N^{n+1}\|^2+\|W_N^{n+1}\|^2\right) + C\left(1+\frac{1}{\kappa }\right)\Delta t\left(\|\tU_N^n\|^2+\|W_N^n\|^2\right) ,
\end{split}\end{equation}
which implies
\begin{equation}\begin{split}
& \|\tU_N^{n+1}\|^2+\|W_N^{n+1}\|^2 \le \frac{1+C(1+\frac{1}{\kappa })\Delta t}{1-2\kappa \Delta t}\left(\|\tU_N^{n}\|^2+\|W_N^{n}\|^2\right). \\
\end{split}\end{equation}
Recall that $\Delta t\leq 1$. By choosing $\kappa  = \frac{1}{4}$, we have
\begin{equation}
\frac{1}{1-2\kappa \Delta t} \le 1+ \Delta t.
\end{equation}
Then
\begin{equation}\begin{split}
& \|\tU_N^{n+1}\|^2+\|W_N^{n+1}\|^2 \le (1+C\Delta t)\left(\|\tU_N^{n}\|^2+\|W_N^{n}\|^2\right), \\
\end{split}\end{equation}
and the conclusion follows.
\end{proof}

%%%%%%%%%%%%%%%%%%%%%%%%%%%%%%%%%%%%%%%
\section{Numerical tests}
\label{sec:num}

%%%%%%%%%%%%%%%%%%%%%%%%%%%%%%%%%%%%%%%

%Linear spectral, $q=2,3,4$
%
%Nonlinear WENO, $q=2,3$
%
%BGK WENO, $q=2,3$ (3?)

In this section, we numerically test the accuracy of the IMEX-BDF schemes applied to several stiff hyperbolic and kinetic equations. We first verify our theoretical results in Section~\ref{subsec:accuracy} by considering the linear system (\ref{linearhyp}) for which the exact solution is available. We then consider a nonlinear hyperbolic relaxation system and the kinetic BGK equation. See Appendix~\ref{appendix:A} for a brief introduction of the BGK equation and its IMEX-BDF discretization.

%As a comparison, we also implement the IMEX-RK schemes, in particular, the ARS(2,2,2) and ARS(4,4,3) schemes which are second and third order accurate, respectively. These schemes are initially proposed in \cite{ARS97} and have been widely used to solve the stiff hyperbolic/kinetic equations. See Appendix~\ref{appendix:A} for a brief introduction of the BGK equation and its IMEX discretization.

\subsection{A linear stiff hyperbolic relaxation system}

Consider the linear system (\ref{linearhyp}) with $b=0.6$ on $x\in[0,1]$ with periodic boundary condition and initial condition
\begin{equation}
u(0,x) = e^{\sin 2\pi x},\quad v(0,x) = b e^{\sin 2\pi x}.
\end{equation}
We adopt the Fourier-Galerkin spectral method for spatial discretization with modes $|k|\leq N$, and fix $N=40$. The exact solution to this problem can be computed analytically using the Fourier transform in $x$. To avoid the initial layer or prepare the initial data satisfying the conditions of Theorem~\ref{thm_accu} (in particular, {\bf Case 2}), we start the computation from time $T_0=1$. The starting values at $T_0+i\Delta t$, $i=0,\dots,q-1$, are taken from the exact solution. We compute the solution to time $T=2$ and record the error as $\|U-u\|_{L^2} + \|V-v\|_{L^2}$.

Figure \ref{fig1} shows the results of IMEX-BDF schemes of order $q=2,3,4$, and various values of $\Delta t$ and $\eps$. In all the subfigures except the last one, each curve represents the error for a fixed $\Delta t$ with $\eps$ ranging from $1e-7$ to $1$. When taking the maximal $L^2$ error among all the tested values of $\eps$ for a fixed $\Delta t$ (see bottom right subfigure), the uniform $q$-th order accuracy is clearly achieved for $q=2,3,4$. This is in perfect agreement with our theoretical results.

\begin{figure}[htp]
\begin{center}
	\includegraphics[width=0.49\textwidth]{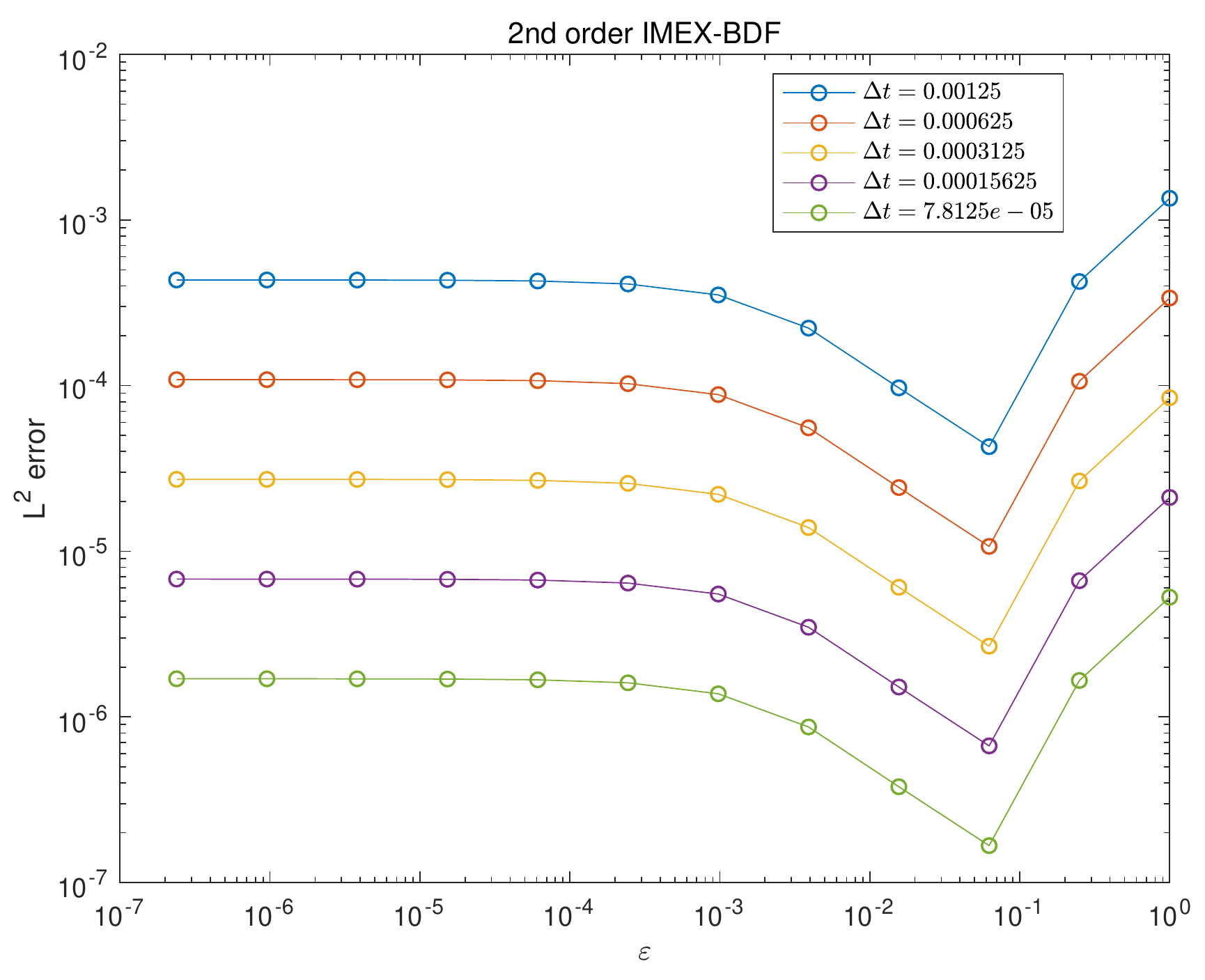}
	\includegraphics[width=0.49\textwidth]{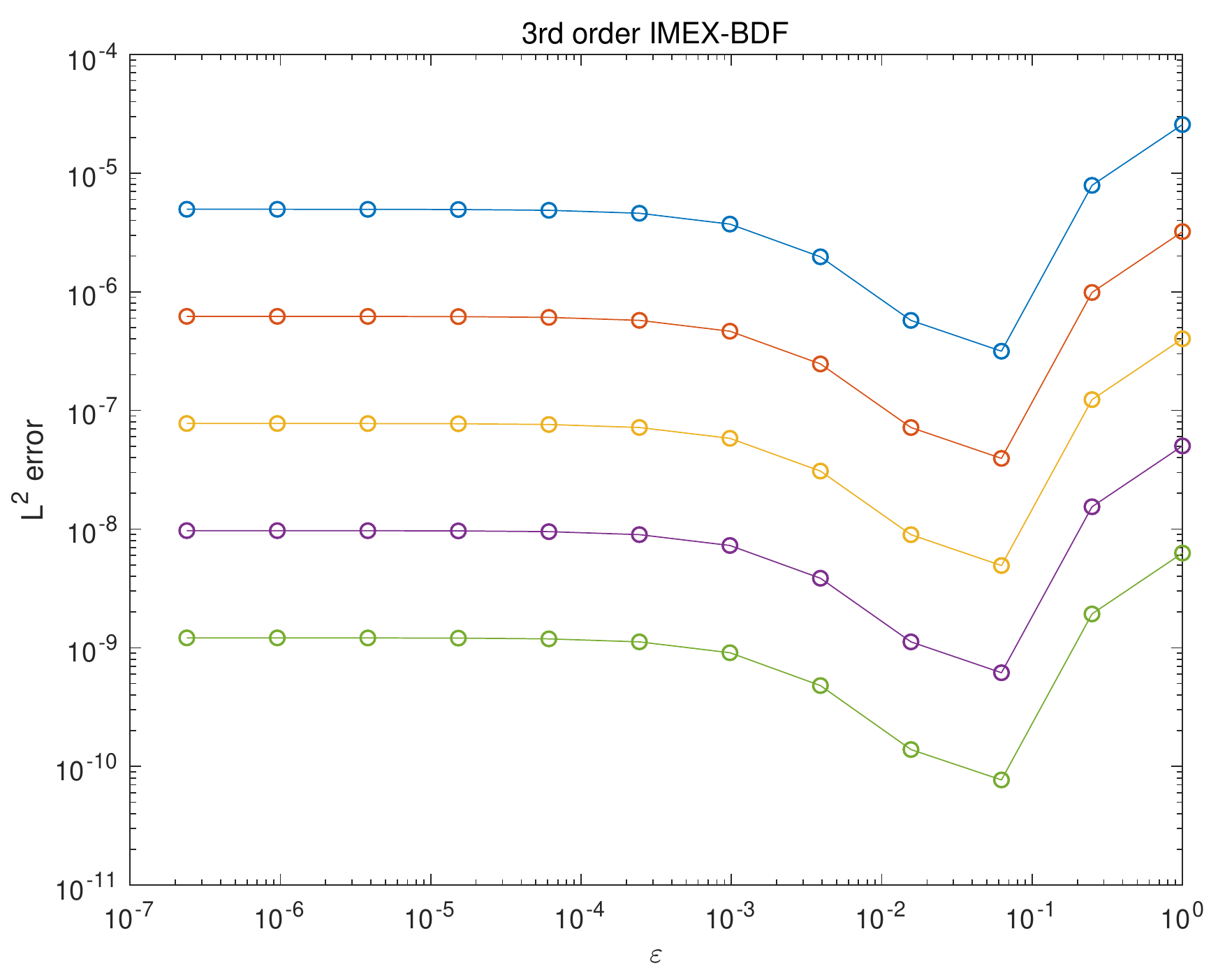}
	\includegraphics[width=0.49\textwidth]{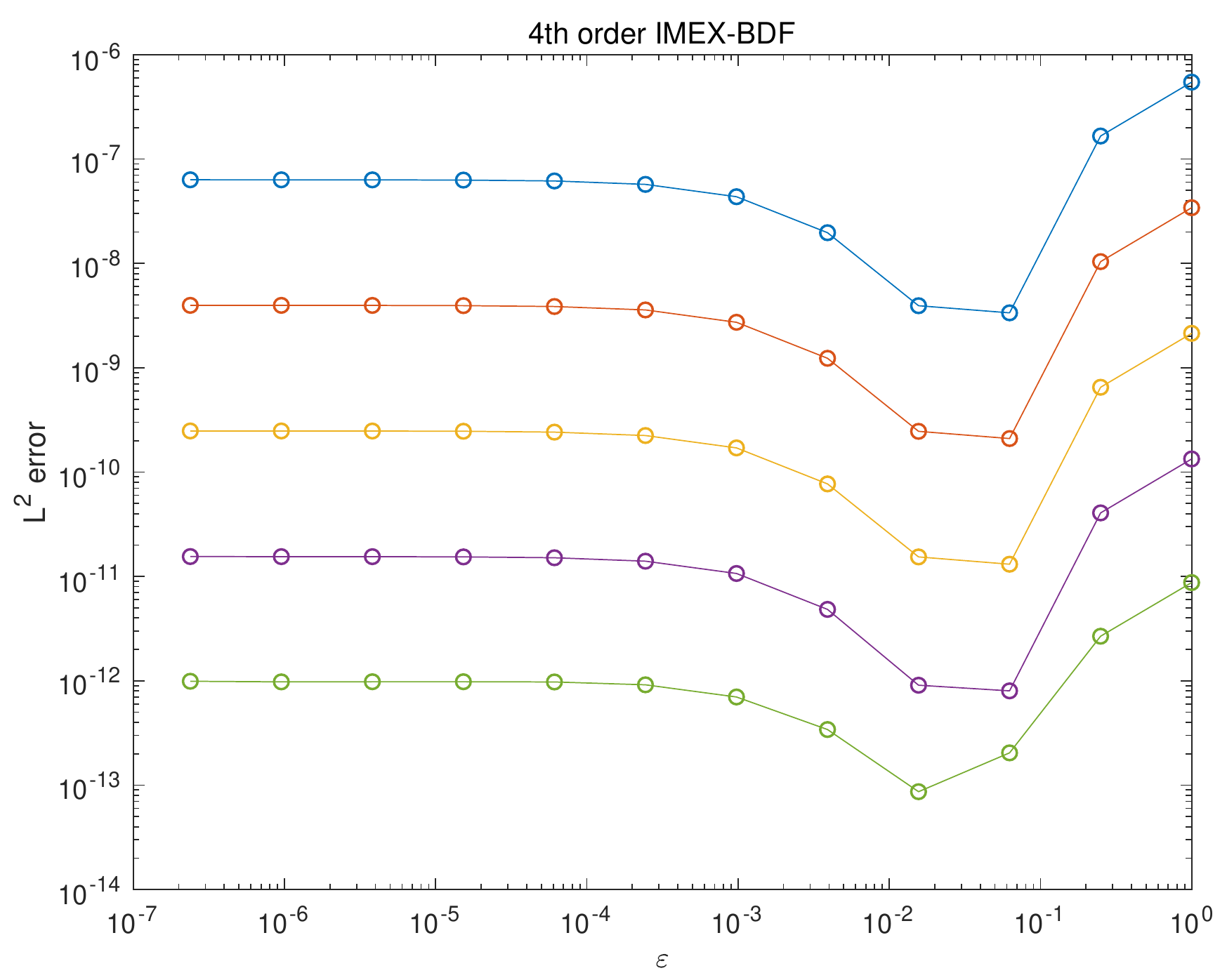}
	\includegraphics[width=0.49\textwidth]{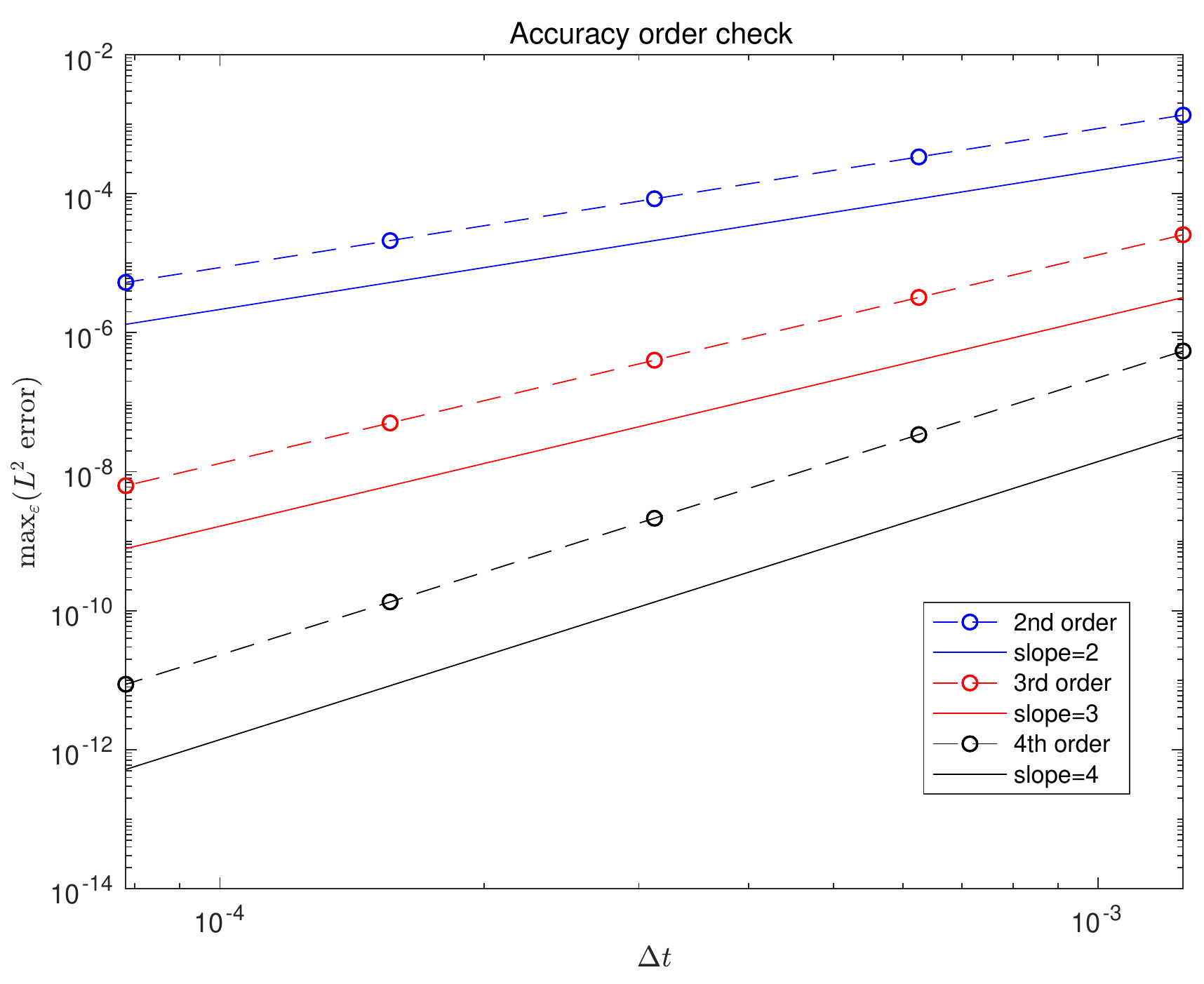}
	\caption{Linear stiff hyperbolic relaxation system. $L^2$ error of the solutions computed by IMEX-BDF schemes. Top left, top right and bottom left figures: second/third/fourth order IMEX-BDF schemes, respectively. In these three subfigures, horizontal axis is $\eps$ ranging from $1e-7$ to $1$, and different curves represent different values of $\Delta t$, as shown in the top left figure. Bottom right figure is obtained as follows: for each scheme, take the maximal $L^2$ error among all values of $\eps$ for a fixed $\Delta t$.}
	\label{fig1}
\end{center}	
\end{figure}

\subsection{A nonlinear stiff hyperbolic relaxation system}

We now consider the following nonlinear hyperbolic relaxation system
\begin{equation}\label{eqnu2}\left\{\begin{split}
& \partial_t u + \partial_x v = 0, \\
& \partial_t v + \partial_x u = \frac{1}{\eps}(bu^2-v),
\end{split}\right.\end{equation}
with $b=0.2$ on $x\in [0,1]$ with periodic boundary condition and initial condition
\begin{equation} 
u(0,x) = \frac{1}{2}e^{\sin2\pi x}=:u_0(x).
\end{equation}
Since the limit of (\ref{eqnu2}) is the Burgers equation which may develop shocks, we discretize in space by a fifth order finite volume WENO scheme \cite{Shu98}. We apply the second and third order IMEX-BDF schemes for time discretization. For the second order scheme, we choose the time step as $\Delta t = \frac{1}{4}\Delta x$ and the initial data for $v$ as
\begin{equation}\label{ini1}
v(0,x) = bu_0^2,
\end{equation}
which is consistent up to $O(1)$. For the third order scheme, we choose the time step as $\Delta t = \frac{1}{3}\Delta x$ and the initial data for $v$ as
\begin{equation}
v(0,x) = b u_0^2 - \eps(1-4b^2u_0^2)\partial_xu_0,
\end{equation}
which is consistent up to $O(\eps)$. The starting values at $i\Delta t$, $i=0,\dots,q-1$, are prepared using ARS(4,4,3) with a much smaller time step $\delta t = \Delta t/500$. We compute the solution to time $T=0.2$ and estimate the error of the solutions $U_{\Delta t, \Delta x}$, $V_{\Delta t, \Delta x}$ as $\|U_{\Delta t, \Delta x}-U_{\Delta t/2,\Delta x/2}\|_{L^2}+\|V_{\Delta t, \Delta x}-V_{\Delta t/2,\Delta x/2}\|_{L^2}$.

The results are shown in Figure \ref{fig2}. The uniform $q$-th order accuracy can be observed, similar to the previous subsection, except for the third order scheme for which one can see a slightly higher convergence rate for large $\Delta t$ due to the error from spatial discretization.

\begin{figure}[htp]
\begin{center}
	\includegraphics[width=0.49\textwidth]{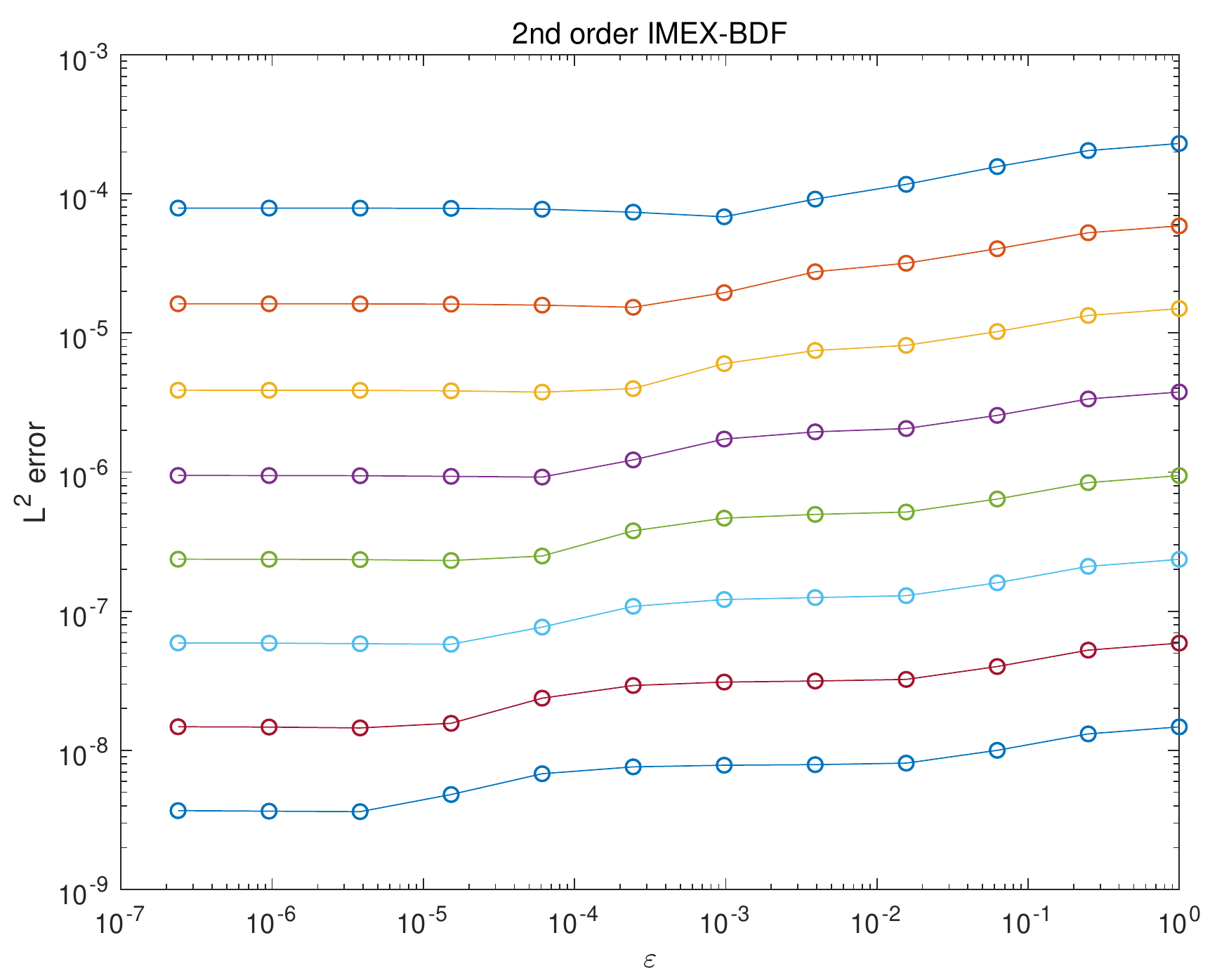}
	\includegraphics[width=0.49\textwidth]{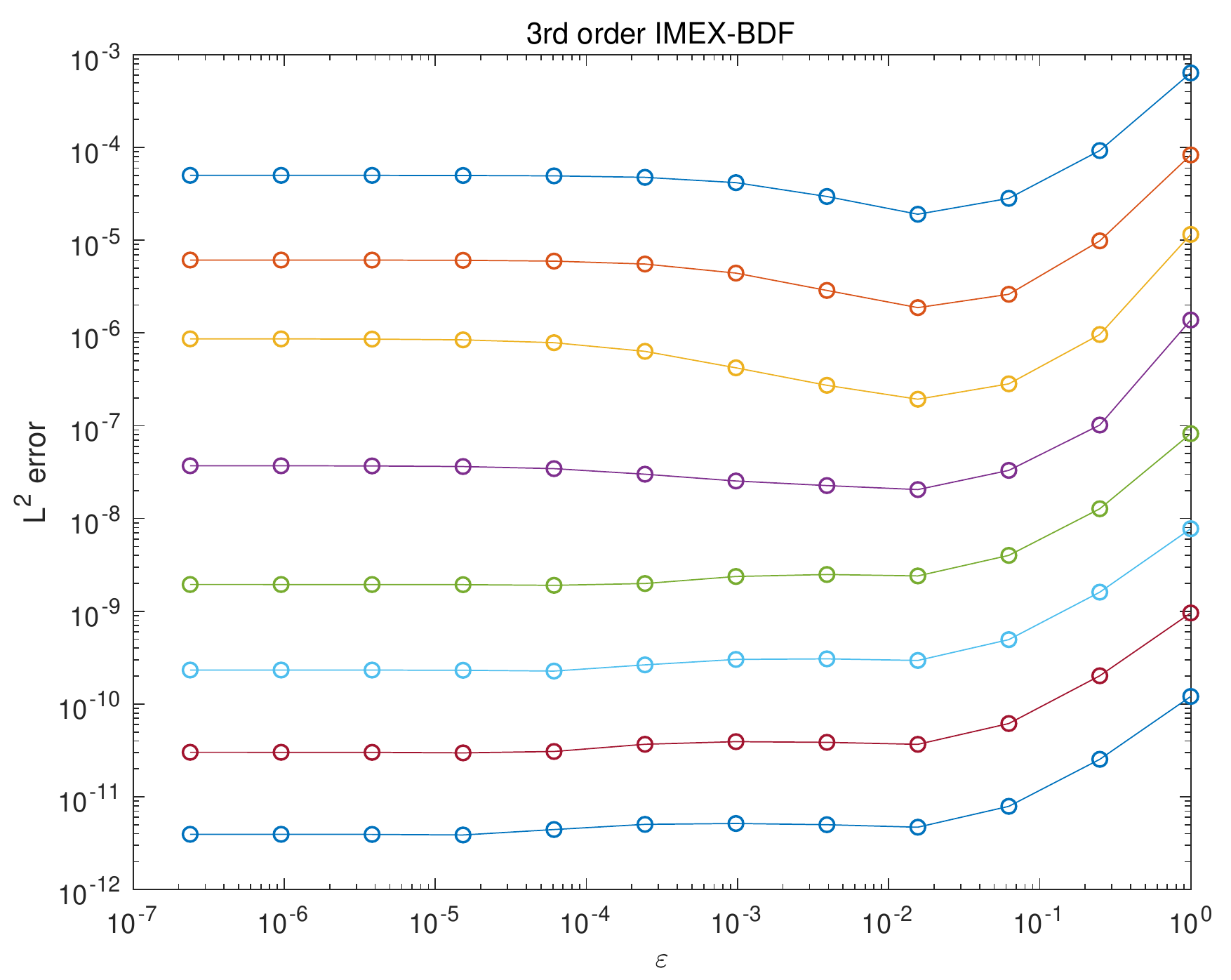}
	\includegraphics[width=0.49\textwidth]{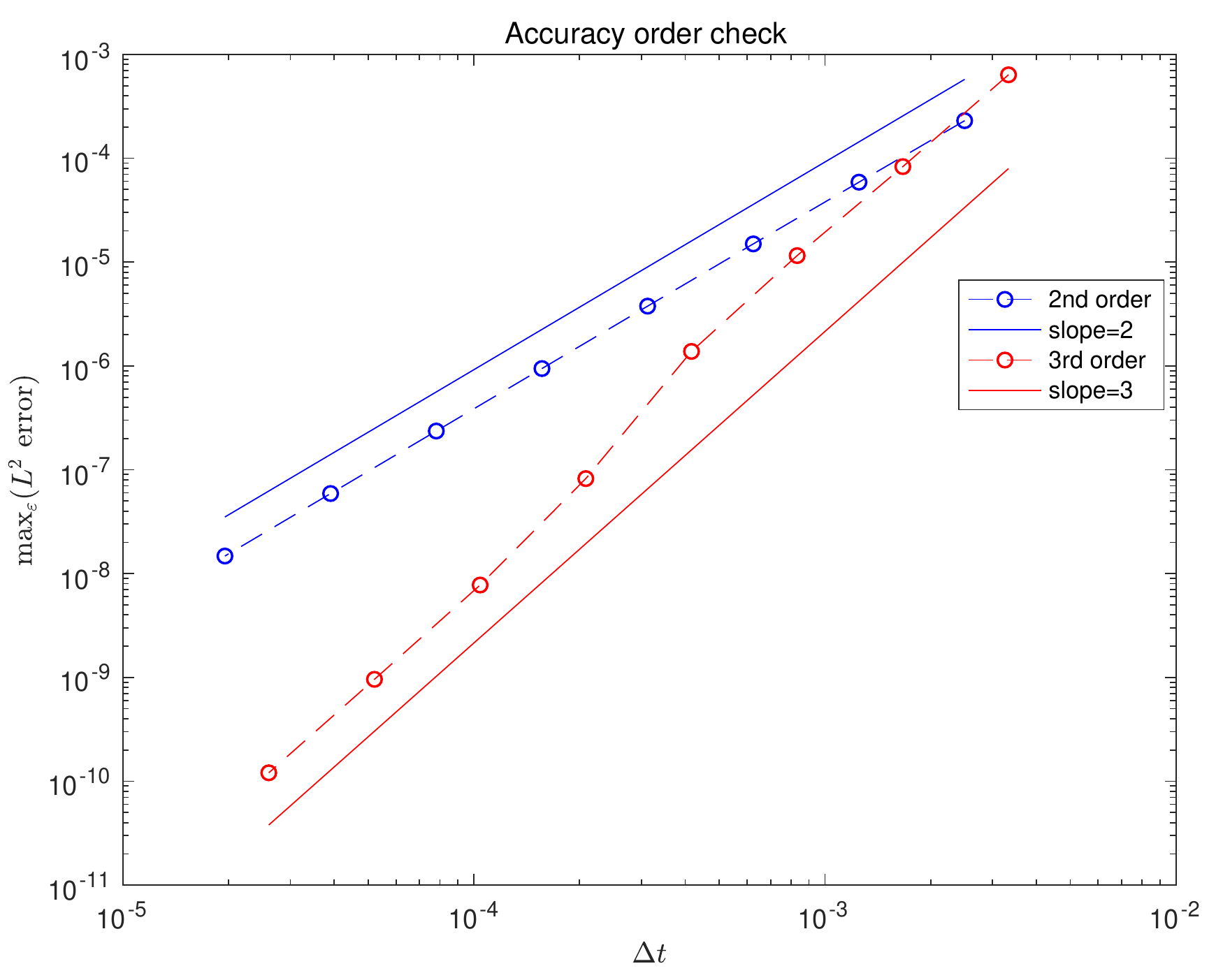}
	\caption{Nonlinear stiff hyperbolic relaxation system. $L^2$ error of the solutions computed by IMEX-BDF schemes. Top left and top right figures: second and third order IMEX-BDF schemes, respectively. In these two subfigures, horizontal axis is $\eps$ ranging from $1e-7$ to $1$, and different curves represent different values of $\Delta t$, given as $\Delta t=\Delta t_0,\Delta t_0/2,\Delta t_0/4,\dots$ from top curve to bottom curve, with $\Delta t_0 = 2.5\times 10^{-3}$ for the second order scheme and $\Delta t_0 = 3.3\times 10^{-3}$ for the third order scheme. Bottom figure is obtained as follows: for each scheme, take the maximal $L^2$ error among all values of $\eps$ for a fixed $\Delta t$.}
	\label{fig2}
\end{center}	
\end{figure}

\begin{remark}
The initial data we take here does not satisfy the conditions in Theorem~\ref{thm_accu}. Nevertheless the $q$-th order uniform accuracy can still be observed. This shows that the conditions of Theorem \ref{thm_accu} may not be optimal. However, if using \eqref{ini1} for the third order scheme, we do observe some order reduction. This means that certain high-order consistency of initial data is necessary to achieve uniform accuracy of high-order IMEX-BDF schemes.
\end{remark}

\subsection{The nonlinear stiff kinetic BGK equation}
\label{subsec:BGK}

We finally consider the kinetic BGK equation in one dimension
\begin{equation}
\partial_t f + v\partial_x f = \frac{1}{\eps}(M[f]-f).
\end{equation}
The spatial domain is taken as $x\in [0,2]$ with periodic boundary condition and discretized by the fifth order finite volume WENO scheme. The velocity domain is truncated into $[-|v|_{\text{max}},|v|_{\text{max}}]$ with $|v|_{\text{max}}=15$ and discretized by a finite difference scheme using $N_v=150$ grid points. The time step is chosen as $\Delta t = \frac{1}{3}\frac{\Delta x}{|v|_{\text{max}}}$ to satisfy the CFL condition.

We apply the second and third order IMEX-BDF schemes for time discretization. For the second order scheme, we take the initial data as
\begin{equation}
f(0,x,v) = M_{\rho,u,T},
\end{equation}
with
\begin{equation}\label{rhouT}
\rho(0,x) = 1+0.2\sin\pi x,\quad u(0,x) = 1,\quad T(0,x) = \frac{1}{1+0.2\sin\pi x} ,
\end{equation}
which is consistent up to $O(1)$. For the third order scheme, we take the initial data as
\begin{equation}
f(0,x,v) = M_{\rho,u,T}\left(1-\eps\left(\frac{(v-u)^2}{2T}-\frac{3}{2}\right)\frac{(v-u)\partial_x T}{T}\right),
\end{equation}
with \eqref{rhouT}, which is consistent up to $O(\eps)$. The starting values at $i\Delta t$, $i=0,\dots,q-1$, are prepared using an IMEX-RK scheme with a much smaller time step $\delta t = \Delta t/500$. We compute the solution to time $T=0.1$ and estimate the $L^2$ error of the solution $f_{\Delta t, \Delta x}$ as $\|f_{\Delta t, \Delta x}-f_{\Delta t/2,\Delta x/2}\|_{L^2_{x,v}}$.

The results are shown in Figure \ref{fig3}. The uniform $q$-th order accuracy can be observed, similar to the previous subsection, except for the third order scheme for which one sees a higher convergence rate because the error from the spatial discretization is always dominating.

%\rs{for relaxation systems I always used the $(u,v)$ form (this is mainly because in $(u,w)$ form it's not clear how to discretize the flux by WENO, in the nonlinear case). The error is measured in $\|U-u\|_{L^2} + \|V-v\|_{L^2}$. For the BGK, the error is measured in $\|F-f\|_{L^2_{x,v}}$.}

\begin{figure}[htp]
\begin{center}
	\includegraphics[width=0.49\textwidth]{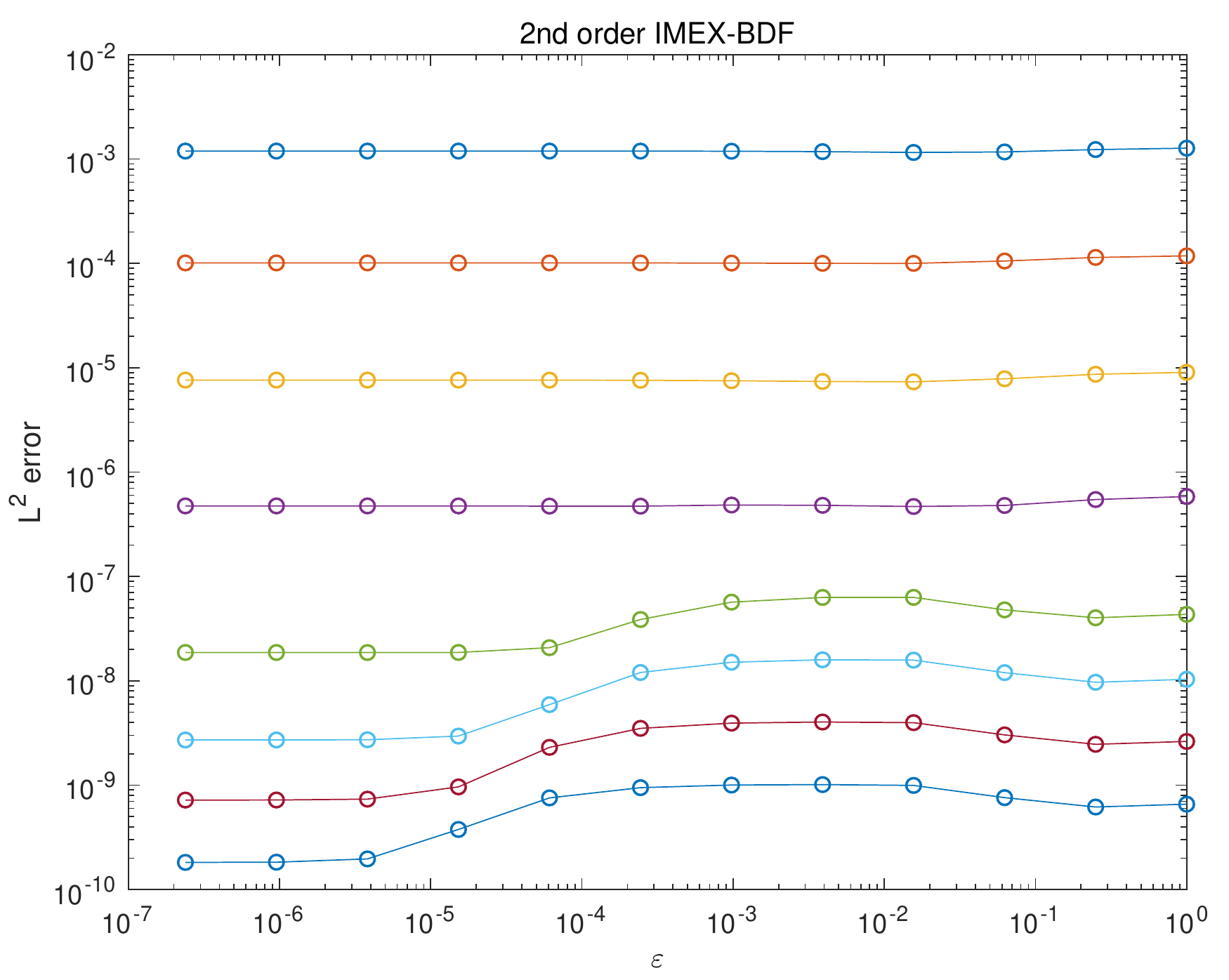}
	\includegraphics[width=0.49\textwidth]{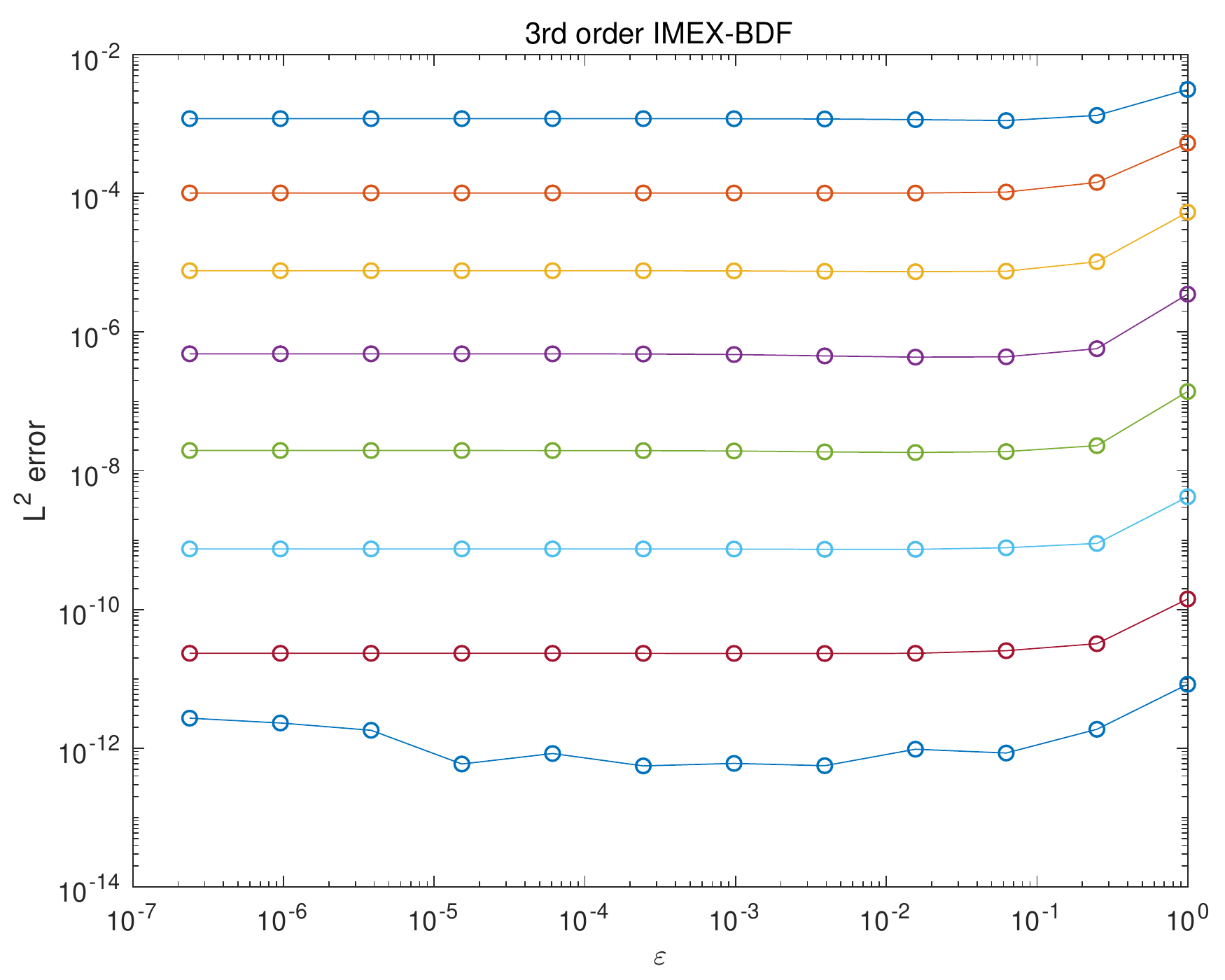}
	\includegraphics[width=0.49\textwidth]{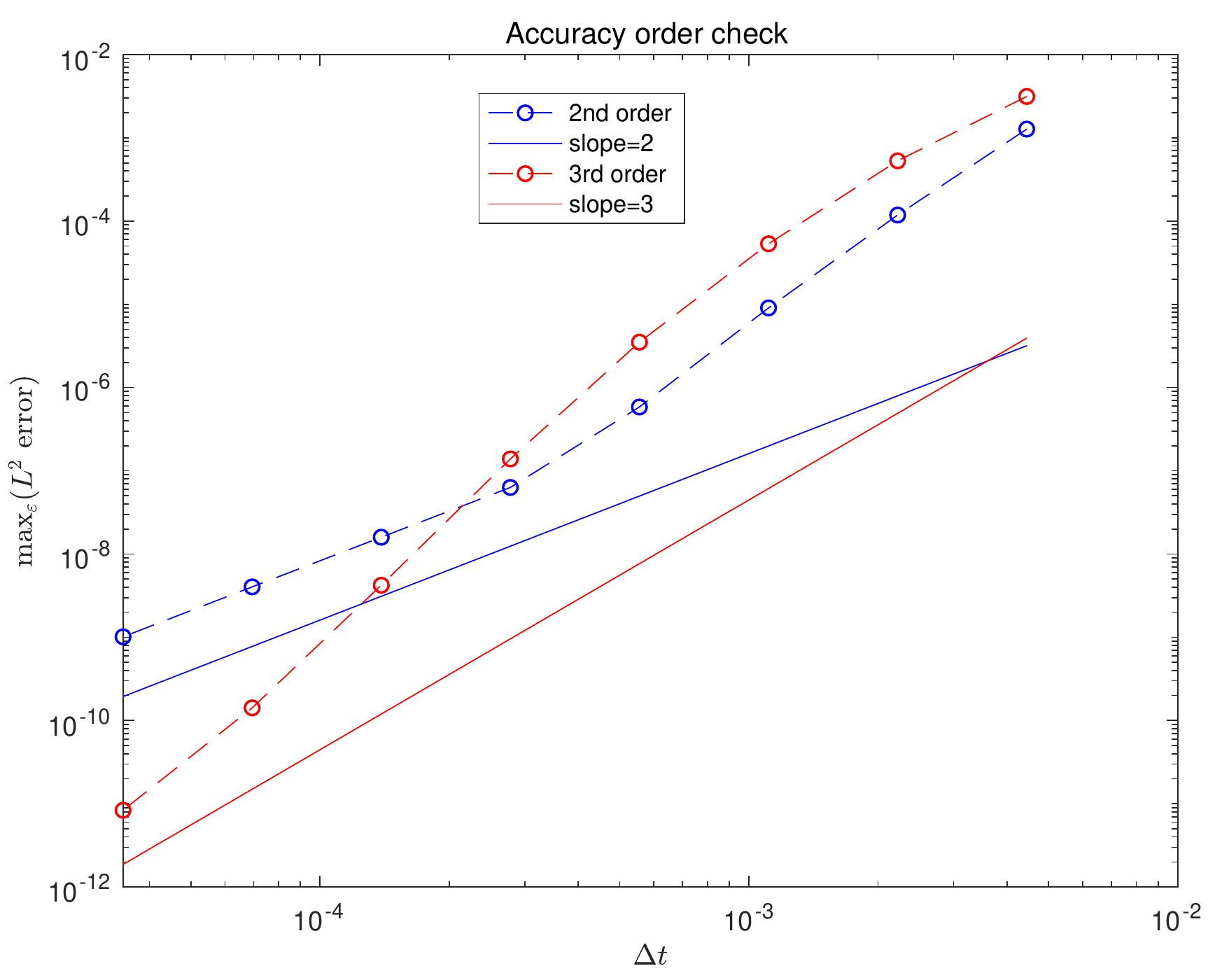}
	\caption{Nonlinear stiff kinetic BGK equation. $L^2$ error of the solutions computed by IMEX-BDF schemes. Top left and top right figures: second and third order IMEX-BDF schemes, respectively. In these two subfigures, horizontal axis is $\eps$ ranging from $1e-7$ to $1$, and different curves represent different values of $\Delta t$, given as $\Delta t=\Delta t_0,\Delta t_0/2,\Delta t_0/4,\dots$ from top curve to bottom curve, with $\Delta t_0 = 4.4\times 10^{-3}$. Bottom figure is obtained as follows: for each scheme, take the maximal $L^2$ error among all values of $\eps$ for a fixed $\Delta t$.}
	\label{fig3}
\end{center}	
\end{figure}

%As a comparison, we compute the same problem by ARS(2,2,2) and ARS(4,4,3) schemes from the framework of IMEX-RK, with the same spatial/velocity discretization and the same time steps. The results are shown in Figure \ref{fig3r}. One can see that ARS(2,2,2) achieves uniform second order accuracy, while ARS(4,4,3) shows slight order reduction for intermediate regimes. However, for ARS(4,4,3), even with the smallest $\Delta t$ we take, the temporal error is still small compared to the spatial error, and therefore a uniform accuracy order of more than three is still observed, as shown in the bottom subfigure.

%\begin{figure}[htp]
%\begin{center}
%	\includegraphics[width=0.49\textwidth]{}
%	\includegraphics[width=0.49\textwidth]{}
%	\includegraphics[width=0.49\textwidth]{}
%	\caption{$L^2$ error for the stiff BGK equation, with IMEX-RK schemes. Top left and top right: ARS(2,2,2) and ARS(4,4,3) schemes, respectively. For these two pictures, horizontal axis is $\eps$, and vertical axis is $L^2$ error. Different curves represent different values of $\Delta t$, given as $\Delta t=\Delta t_0,\Delta t_0/2,\Delta t_0/4,\dots$ from top curve to bottom curve, with $\Delta t_0 = 4.4\times 10^{-3}$. Bottom picture shows the uniform accuracy order of the schemes by taking the maximal $L^2$ error among all $\eps$ for a fixed $\Delta t$.}
%	\label{fig3r}
%\end{center}	
%\end{figure}

\section{Conclusion}
\label{sec:con}

The stiff kinetic equation (\ref{kinetic}) plays an important role in multiscale modeling by connecting mesoscopic kinetic and macroscopic fluid descriptions. Inspired by its structure, we study in this paper a simple linear hyperbolic system with stiff relaxation. Our main concern is to understand the accuracy of a class of IMEX methods, IMEX-BDF schemes, that are widely used to solve this kind of equations. By studying the regularity of the solution in time and introducing a new multiplier technique, we were able to establish uniform stability and accuracy of these schemes for (\ref{linearhyp}). Its extension to the variable coefficient case was also considered. We provided several numerical examples for both linear and nonlinear problems to validate our theoretical findings.

Regarding future work, one possibility is to consider high order spatial discretizations other than the spectral method, for example, the discontinuous Galerkin method. Another direction is to study the hyperbolic/kinetic equations in diffusive scaling. As opposed to the scaling in the current work which leads to a hyperbolic equation when $\eps \rightarrow 0$, the diffusive scaling leads to a diffusion equation in the limit. There is already some numerical analysis work in this case \cite{LM10, JLQX14}, whereas the time discretization is limited to first order. 

%\textcolor{blue}{Finally, it is interesting and challenging to prove the uniform accuracy of the IMEX-RK schemes that do not exhibit order reduction numerically.}

%Finally, it is interesting and challenging to design IMEX-RK schemes (with implicit part being DIRK) that are of uniform accuracy.

\section*{Acknowledgement}

JH is grateful to a discussion group held in Tianyuan Mathematical Center at Xiamen University, China in June 2019 for revisiting the order reduction phenomenon of IMEX-RK schemes, which motivates the current work.

\appendix

\section*{Appendix}

%%%%%%%%%%%%%%%%%%%%%%%%%%%%%%%%%%%%%%%
\section{IMEX-BDF schemes for the kinetic BGK equation}
%\section{IMEX-BDF and IMEX-RK schemes for the kinetic BGK equation}
\label{appendix:A}
%%%%%%%%%%%%%%%%%%%%%%%%%%%%%%%%%%%%%%%

In this appendix, we describe briefly the kinetic BGK equation (equation (\ref{kinetic}) with $\mathcal{Q}$ being the BGK operator \cite{BGK54}) along with its time discretization using IMEX-BDF schemes. Further details can be found in many references, e.g., \cite{DP17}.
%In this appendix, we describe briefly the kinetic BGK equation (equation (\ref{kinetic}) with $\mathcal{Q}$ being the BGK operator \cite{BGK54}) along with its time discretization using both IMEX-BDF and IMEX-RK schemes. Further details can be found in many references, e.g., \cite{DP13, DP17}.

The BGK equation reads
\begin{equation} \label{BGK}
\partial_t f+v\cdot \nabla_x f=\frac{1}{\varepsilon} (M[f]-f), \quad t>0, \quad x\in \Omega\subset \mathbb{R}^d, \quad v\in \mathbb{R}^d, \quad d=1,2,3,
\end{equation}
where $f=f(t,x,v)$ is the PDF, $\varepsilon$ is the Knudsen number, and $M$ is the Maxwellian, or local equilibrium, defined as
\begin{equation}
M[f]=\frac{\rho}{(2\pi T)^{\frac{d}{2}}}\exp\left(-\frac{|v-u|^2}{2T}\right),
\end{equation}
where $\rho$, $u$ and $T$ are density, bulk velocity and temperature given by the moments of $f$:
\begin{equation}
\rho = \int_{\mathbb{R}^{d}}f\,\rd{v}, \quad u=\frac{1}{\rho}\int_{\mathbb{R}^{d}}f v \,\rd{v}, \quad T=\frac{1}{d\rho}\int_{\mathbb{R}^{d}} f |v-u|^2\,\rd{v}.
\end{equation}
It is easy to verify the Maxwellian $M$ shares the same first $d+2$ moments as $f$:
\begin{equation} \label{prop}
\langle M[f] \phi \rangle=\langle f \phi \rangle, \quad \langle \cdot \rangle :=\int_{\mathbb{R}^{d}} \cdot \,\rd{v}, \quad \phi(v):=(1,v,|v|^2)^T, 
\end{equation}
and the moments can be represented using $\rho$, $u$ and $T$ as
\begin{equation}
\langle M[f] \phi \rangle=\langle f \phi \rangle=:U, \quad U=(\rho,\rho u,\rho u^2+d \rho T)^T.
\end{equation}

The IMEX-BDF scheme applied to (\ref{BGK}) reads
\begin{equation} \label{BGK-BDF}
\sum_{i=0}^q\alpha_i f^{n+i} + \Delta t \sum_{i=0}^{q-1}\gamma_i  v\cdot \nabla_x f^{n+i} = \frac{\beta\Delta t}{\eps}(M^{n+q}-f^{n+q}),
\end{equation}
where the coefficients $\alpha$, $\beta$ and $\gamma$ are given in Table~\ref{coeff_table}. 

The scheme (\ref{BGK-BDF}) appears nonlinearly implicit since $M^{n+q}$ depends on $f^{n+q}$. However, it can be implemented in an explicit manner using the property (\ref{prop}). Indeed, taking $\langle \cdot \phi\rangle$ on both sides of (\ref{BGK-BDF}) yields
\begin{equation}
\sum_{i=0}^q\alpha_i U^{n+i} + \Delta t \sum_{i=0}^{q-1}\gamma_i   \nabla_x \cdot \langle vf^{n+i}\phi\rangle = 0.
\end{equation}
Hence one can obtain $U^{n+q}$ first, which gives $\rho^{n+q}$, $u^{n+q}$, $T^{n+q}$ and consequently defines $M^{n+q}$. Then $f^{n+q}$ is explicitly solvable from (\ref{BGK-BDF}).

\section{Lemma \ref{lem_G} for $q=4$}
\label{appendix:B}
%%%%%%%%%%%%%%%%%%%%%%%%%%%%%%%%%%%%%%%

Here we list an approximate choice of coefficients for $q=4$. These values are obtained using symbolic computation such that \eqref{lem_G_11} and \eqref{lem_G_21} are satisfied up to an error of $10^{-31}$ for each coefficient of $u_iu_j$.
\begin{equation}\begin{split}
  &  g_{11} = 0.0039752881793877403062594960990749\\ &
    g_{22} = 0.064911795951738806179916997308306\\ &
    g_{33} = 0.15895411498724386738087416173813\\ &
    g_{44} = 0.094405276410813782029324113702474\\ &
    g_{12} = -0.015901152717550961225037984396299\\ &
    g_{13} = 0.023851729076326441837556976594449\\ &
    g_{14} = -0.015901152717550961225037984396299\\ &
    g_{23} = -0.099845362848676151804359071547141\\ &
    g_{24} = 0.068060968391835181509937875064459\\ &
    g_{34} = -0.11343769059020016642872820354501\\ &
    \eta_1 = 0.15803668922323725486664131361509\\ &
    \eta_2 = -0.71978015153831346379435821236894\\ &
    \eta_3 = 1.4954886593252805371567252971026\\ &
    d_1 = 0.90559472358918621797067588629753\\ &
    d_2 = 0.13803083956207431618956583677343\\ &
    a_{11} = 0.33641496341408589312936763149214\\ &
    a_{22} = 0.76333671636580335303312323666967\\ &
    a_{33} = 0.98143988982596163900489424649966\\ &
    a_{12} = -0.37897607563001842534574230771888\\ &
    a_{13} = 0.27087482555618781445745449601102\\ &
    a_{23} = -0.68412028602560052688774821729113\\ &
    c_1 = 0.58001289935145915953659169454951\\ &
    c_2 = -0.65339249532858690456475648019655\\ &
    c_3 = 0.46701517476433063541910705624076\\ &
    c_4 = -0.13623549527945483633692800818261\\
\end{split}\end{equation}
The smallest eigenvalues of $G$ and $A$ are approximately given by
\begin{equation}
\lambda_1(G)\approx 0.0000050314827031121361651672184255414,\quad \lambda_1(A)\approx 0.076485547272566738154682052748733.
\end{equation}

\bibliographystyle{plain}
\bibliography{hu_bibtex}

\end{document}